\begin{document}
\newtheorem{lemme}{Lemma}[section]
\newtheorem{proposition}{Proposition}[section]
\newtheorem{cor}{Corollary}[section]
\numberwithin{equation}{section}
\newtheorem{theoreme}{Theorem}[section]
\theoremstyle{remark}
\newtheorem{example}{Example}[section]
\newtheorem*{ack}{Acknowledgment}
\theoremstyle{definition}
\newtheorem{definition}{Definition}[section]
\theoremstyle{remark}
\newtheorem*{notation}{Notation}
\theoremstyle{remark}
\newtheorem{remark}{Remark}[section]
\newenvironment{Abstract}
{\begin{center}\textbf{\footnotesize{Abstract}}%
\end{center} \begin{quote}\begin{footnotesize}}
{\end{footnotesize}\end{quote}\bigskip}
\newenvironment{nome}
{\begin{center}\textbf{{}}%
\end{center} \begin{quote}\end{quote}\bigskip}

\newcommand{\triple}[1]{{|\!|\!|#1|\!|\!|}}
\newcommand{\xx}{\langle x\rangle}
\newcommand{\ep}{\varepsilon}
\newcommand{\al}{\alpha}
\newcommand{\be}{\beta}
\newcommand{\de}{\partial}
\newcommand{\la}{\lambda}
\newcommand{\La}{\Lambda}
\newcommand{\ga}{\gamma}
\newcommand{\del}{\delta}
\newcommand{\Del}{\Delta}
\newcommand{\sig}{\sigma}
\newcommand{\ome}{\Omega^n}
\newcommand{\Ome}{\Omega^n}
\newcommand{\C}{{\mathbb C}}
\newcommand{\N}{{\mathbb N}}
\newcommand{\Z}{{\mathbb Z}}
\newcommand{\R}{{\mathbb R}}
\newcommand{\T}{{\mathbb T}}
\newcommand{\Rn}{{\mathbb R}^{n}}
\newcommand{\Rnu}{{\mathbb R}^{n+1}_{+}}
\newcommand{\Cn}{{\mathbb C}^{n}}
\newcommand{\spt}{\,\mathrm{supp}\,}
\newcommand{\Lin}{\mathcal{L}}
\newcommand{\SSS}{\mathcal{S}}
\newcommand{\F}{\mathcal{F}}
\newcommand{\xxi}{\langle\xi\rangle}
\newcommand{\eei}{\langle\eta\rangle}
\newcommand{\xei}{\langle\xi-\eta\rangle}
\newcommand{\yy}{\langle y\rangle}
\newcommand{\dint}{\int\!\!\int}
\newcommand{\hatp}{\widehat\psi}
\renewcommand{\Re}{\;\mathrm{Re}\;}
\renewcommand{\Im}{\;\mathrm{Im}\;}
\title[
$H^1$ scattering for mass subcritical short-range NLS
 ]
{$H^1$ scattering for mass-subcritical NLS \\with short-range nonlinearity and initial data in $\Sigma$}
 \author{N. Burq}
\address{D\'epartment de Math\'ematiques Universit\'e Paris-Saclay, Bat. 307, 91405 Orsay Cedex
France}
\email{nicolas.burq@universite-paris-saclay.fr}
 \author{V. Georgiev}
\address{Dipartimento di Matematica, Universit\`a di Pisa, Italy}
\email{georgiev@dm.unipi.it}
 \author{N. Tzvetkov}
\address{D\'epartment de Math\'ematiques  CY  Cergy Paris Universit\`e, 2 av. Adolphe Chauvin, 95302 Cergy-Pontoise Cedex, France}
\email{nikolay.tzvetkov@cyu.fr}
\author{N. Visciglia}
\address{Dipartimento di Matematica, Universit\`a di Pisa, Italy}
\email{nicola.visciglia@unipi.it}
\thanks{N.T.  was supported by ANR grant ODA (ANR-18-CE40-0020-01), 
V.G. and N.V. by PRIN grant 2020XB3EFL, moreover they acknowledge the Gruppo Nazionale per l' Analisi Matematica, la Probabilit\`a e le loro Applicazioni (GNAMPA) of the Istituzione Nazionale di Alta Matematica (INDAM)}
\date{\today}
 \maketitle
 \par \noindent
\begin{abstract}

We consider short-range mass-subcritical
nonlinear Schr\"odinger equations
and we show that the corresponding solutions
with initial data in $\Sigma$ scatter in $H^1$. Hence we up-grade the classical scattering result proved by Yajima and Tsutsumi
from $L^2$ to $H^1$.
We also provide some partial results concerning the scattering of the first order moments, as well as a short proof via lens transform of a classical result
due to Tsutsumi and Cazenave-Weissler on the scattering in $\Sigma$.
\end{abstract}
\section{Introduction}
In this paper we are interested in the long-time behavior of solutions to the following Cauchy problems
associated with the defocusing nonlinear Schr\"odinger equations (NLS):
\begin{equation}\label{u}\begin{cases}
i\partial_t u +\Delta u -u|u|^{p}=0, \quad (t, x)\in \R\times \R^n\\
u(0,.)=\varphi.
\end{cases}
\end{equation}
It is well--known, by combining Strichartz estimates and a contraction argument, that the Cauchy problems  above are locally well-posed for every initial datum $\varphi\in H^1(\R^n)$
with time of existence which depends only from the size of the initial datum in $H^1(\R^n)$,
provided that $0<p<\frac 4{n-2}$ if $n\geq 3 $ and $0<p<\infty$ if $n=1,2$. Then the conservation of mass and conservation of the energy:
\begin{equation}\label{energy}E(u(t,x))=\frac 12 \int_{\R^n} |\nabla u(t,x)|^2 dx + \frac 1{p+2}\int_{\R^n} |u(t,x)|^{p+2} dx,\end{equation}
(the energy is positive since we consider the defocusing NLS) imply that the $H^1(\R^n)$ norm of the solution is uniformly bounded
and hence the local theory can be iterated in order to provide a global well-posedness result.
There is a huge literature around this topic, for simplicity we quote the very complete book \cite{c} and all the references therein.
We also recall that the much more difficult critical nonlinearity $p=\frac 4{n-2}$ for $n\geq 3$
has been extensively studied more recently starting from the pioneering paper \cite{bourgain} in the radial case  and its extension in the non radial setting in \cite{ckstt}.
In the sequel, in order to emphasize the dependence of the nonlinear solution
from the initial datum, we shall write $u_\varphi(t,x)$
to denote the unique global solution to \eqref{u} where $\varphi\in H^1(\R^n)$ and $p$ is assumed to be given.
\\

Once the existence of global solutions is established then it is natural to look at the long time behavior. In the range of mass-supercritical and energy-subcritical nonlinearities, namely
$\frac 4n <p < \frac 4{n-2}$ for $n\geq 3$ and $\frac 4n< p<\infty$ for $n=1,2$, it has been proved
that nonlinear solutions to NLS behave as free waves as $t\rightarrow \pm \infty$. More precisely we have
the following property:
\begin{multline}\label{scatteringH1}
\forall \varphi \in H^1(\R^n) \quad \exists \varphi_\pm \in H^1(\R^n) \hbox{ s.t. }
\|u_\varphi(t,x)-e^{it\Delta} \varphi_\pm\|_{H^1(\R^n)}\overset{t\rightarrow \pm \infty}\longrightarrow 0,\\
\hbox{ provided that } \frac 4n <p < \frac 4{n-2} \hbox{ for } n\geq 3, \quad \frac 4n< p<\infty \hbox{ for } n=1,2.
\end{multline}
We point-out that the scattering property \eqref{scatteringH1} can be stated in the following equivalent form
\begin{equation}\label{scatteringH1equiv}
\|e^{-it\Delta} u_\varphi(t,x)- \varphi_\pm\|_{H^1(\R^n)}\overset{t\rightarrow \pm \infty}\longrightarrow 0
 \end{equation}
by using the fact that the group $e^{it\Delta}$ is an isometry in $H^1(\R^n)$.
The property \eqref{scatteringH1}  is known in the literature as the asymptotic completness of the wave operator in $H^1(\R^n)$,
or more quickly $H^1(\R^n)$ scattering. Roughly speaking \eqref{scatteringH1} implies that
for large times (both positive and negative) the nonlinear evolution can be approximated in $H^1$ by a linear one with a suitably modified initial data which represents the nonlinear effect.
The literature around $H^1(\R^n)$ scattering in the mass-supercritical and energy-subcritical case is huge. Beside the already quoted reference \cite{c} and the bibliography
therein,
we mention al least \cite{gv} in the case $n\geq 3$ and
\cite{n} for $n=1,2$. More recently shorter proof of scattering in the energy space $H^1(\R^n)$ for mass-supercritical and energy-subcritical NLS has been achieved by using the interaction Morawetz estimates, first
introduced in \cite{ckstt}. We mention in this direction \cite{cgt}, \cite{pv}, \cite{vis} and all the references therein.
We recall that the scattering of nonlinear solutions to free waves in the energy space has been extended to  the energy critical case, namely
$p=\frac 4{n-2}$ when $n\geq 3$, in a series of papers starting from the pioneering articles
\cite{bourgain} and \cite{ckstt} for $n=3$. Its extension in higher dimension is provided in \cite{v}. In the mass critical case $p=\frac 4n$ the $H^1(\R^n)$ scattering property follows from
\cite{do}, \cite{do1}, \cite{do2}.\\

Notice that  the mass subcritical nonlinearities, namely $0<p<\frac 4n$, do not enter in the analysis above. In fact we can introduce the intermediate nonlinearity
$p=\frac 2n$ which is a discriminant between short-range ($\frac 2n<p<\frac 4n$) and long-range nonlinearity
($0<p\leq \frac 2n$). More specifically one can prove that in the long-range mass-subcritical setting
nonlinear solutions do not behave as free waves.
In this direction we mention \cite{barab} and \cite{c}, where it is proved that
the scattering property fails in the $L^2(\R^n)$ topology for every nontrivial solution to NLS, even for initial datum very smooth.
The precise statement can be given in the following form:
\begin{multline}\label{barab}
\limsup_{t\rightarrow \pm \infty}
\|u_\varphi(t,x)-e^{it\Delta} \psi\|_{L^2(\R^n)}>0, \\
\quad \forall (\varphi, \psi)\in C^\infty_0(\R^n) \times L^2(\R^n), \quad (\varphi, \psi)\neq (0,0), \hbox{ provided that } 0<p\leq \frac 2n.
\end{multline}

On the contrary in the short-range mass-subcritical case, following \cite{yt}, one can show the following version of scattering:
\begin{equation}\label{scatteringsigma}
\forall \varphi \in \Sigma_n \quad \exists \varphi_\pm \in L^2(\R^n) \hbox{ s.t. } \\
\|u_\varphi(t,x)-e^{it\Delta} \varphi_\pm\|_{L^2(\R^n)}\overset{t\rightarrow \pm \infty}\longrightarrow 0, \hbox{ provided that } \frac 2n<p<\frac 4n
\end{equation}
where the space $\Sigma_n$ is the following one
$$\Sigma_n=\Big \{\varphi \in H^1(\R^n) | \int_{\R^n} |x|^2 |\varphi|^2 dx<\infty \Big \}$$
endowed with the following norm
$$\|\varphi\|_{\Sigma_n}^2=\int_{\R^n} (|\nabla \varphi|^2 + |\varphi|^2 + |x|^2 |\varphi|^2) dx.$$
Notice that the result in \cite{yt} is very general, in the sense that the full set of short-range mass-subcritical nonlinearities $\frac 2n<p<\frac 4n$ is covered,
and is sharp in view of the aforementioned result in \cite{barab}.
However the weakness of \eqref{scatteringsigma} is that although the initial datum is assumed to belong to the space $\Sigma_n$, the convergence to free waves
is proved only in the $L^2(\R^n)$ sense.\\

The main aim of this paper is to overcome, at least partially, this fact and to up-grade the convergence
from $L^2(\R^n)$ to $H^1(\R^n)$ by assuming that the initial datum belongs to the space $\Sigma_n$. We can now state the main result of this paper
\begin{theoreme}\label{H1}
Assume $\frac 2n<p<\frac 4n$ then for every $\varphi \in \Sigma_n$ there exist $\varphi_\pm\in H^1(\R^n)$
such that
\begin{equation}\label{scattsigma}
\|u_\varphi(t,x)-e^{it\Delta} \varphi_\pm\|_{H^1(\R^n)}\overset{t \rightarrow \pm \infty} \longrightarrow  0.\end{equation}
\end{theoreme}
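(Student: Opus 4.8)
The plan is to establish the $H^1$ convergence through a Cauchy criterion for $e^{-it\Delta}u_\varphi(t)$. Since the group $e^{it\Delta}$ is an isometry of $H^1(\Rn)$, the statement \eqref{scattsigma} is equivalent to the assertion that $e^{-it\Delta}u_\varphi(t)$ is Cauchy in $H^1(\Rn)$ as $t\to\pm\infty$; by Duhamel's formula this amounts to controlling $\|\int_s^t e^{-i\tau\Delta}(u|u|^p)(\tau)\,d\tau\|_{H^1}$ for $s,t$ large. The $L^2(\Rn)$ component of this quantity tends to $0$ by the Yajima--Tsutsumi result \eqref{scatteringsigma}, so the scattering states $\varphi_\pm$ already exist in $L^2(\Rn)$ and the whole problem reduces to the gradient: it suffices to prove that $\nabla u_\varphi(t)$ scatters in $L^2(\Rn)$, i.e. that $e^{-it\Delta}\nabla u_\varphi(t)$ converges in $L^2(\Rn)$. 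The limit then defines $\nabla\varphi_\pm\in L^2(\Rn)$, placing $\varphi_\pm$ in $H^1(\Rn)$ and yielding \eqref{scattsigma}. I will only discuss $t\to+\infty$, the other case being symmetric.

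The first ingredient is time decay, which I would extract from the Galilean vector field $J(t)=x+2it\nabla=2it\,e^{i|x|^2/4t}\nabla(e^{-i|x|^2/4t}\,\cdot\,)$, which commutes with $i\partial_t+\Delta$. Because the nonlinearity is gauge invariant one has the pointwise bound $|J(t)(u|u|^p)|\lesssim |u|^p|J(t)u|$, and the associated pseudoconformal identity gives, for a suitable $\kappa>0$,
\[
\frac{d}{dt}\Big(\|J(t)u\|_{L^2}^2+\kappa\,t^2\|u\|_{L^{p+2}}^{p+2}\Big)=\mu\,t\,(4-np)\,\|u\|_{L^{p+2}}^{p+2}\ge 0\qquad(t>0),
\]
since the equation is defocusing and $p<\tfrac4n$. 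Bounding $t^2\|u\|_{L^{p+2}}^{p+2}$ by the bracket itself turns this into a linear Gronwall inequality and yields $\|J(t)u(t)\|_{L^2}\lesssim \langle t\rangle^{1-np/4}$. Combining this with the Gagliardo--Nirenberg inequality applied to $e^{-i|x|^2/4t}u$ and with the conservation of mass produces the decay
\[
\|u(t)\|_{L^\rho(\Rn)}\lesssim \langle t\rangle^{-\frac{np}{4}\,n(\frac12-\frac1\rho)},\qquad 2\le\rho\le\tfrac{2n}{n-2}.
\]

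With these bounds in hand I would treat the gradient equation $i\partial_t(\nabla u)+\Delta(\nabla u)=\nabla(u|u|^p)$, where $|\nabla(u|u|^p)|\lesssim |u|^p|\nabla u|$. Fixing a Schr\"odinger-admissible pair $(q,r)$ and H\"older exponents $\rho,m$ with $\frac1{r'}=\frac p\rho+\frac1r$ and $\frac1m=1-\frac2q$, one estimates $\|\nabla(u|u|^p)\|_{L^{q'}_tL^{r'}_x}\lesssim \|u\|_{L^{pm}_tL^\rho_x}^p\,\|\nabla u\|_{L^q_tL^r_x}$. The decay above makes $\|u\|_{L^{pm}_tL^\rho_x([0,\infty))}$ finite and, more importantly, small on $[T,\infty)$ for $T$ large; feeding this into the Strichartz estimate for the gradient equation and running a continuity argument gives a global Strichartz bound $\|\nabla u\|_{L^q_tL^r_x([0,\infty))}<\infty$, whence $\nabla(u|u|^p)\in L^{q'}_tL^{r'}_x([0,\infty))$. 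Finally, the dual Strichartz estimate gives $\|\int_T^{T'}e^{-is\Delta}\nabla(u|u|^p)\,ds\|_{L^2}\lesssim \|\nabla(u|u|^p)\|_{L^{q'}_tL^{r'}_x([T,\infty))}\to 0$, so $e^{-it\Delta}\nabla u(t)$ is Cauchy in $L^2(\Rn)$ and the proof is complete.

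The main obstacle is the time integrability at the lower end of the short-range window. The decay rate $\frac{np}{4}n(\frac12-\frac1\rho)$ is degraded with respect to the free rate $n(\frac12-\frac1\rho)$ precisely because $\|J(t)u\|_{L^2}$ is allowed to grow (the pseudoconformal quantity is increasing in the subcritical defocusing case), and as $p\downarrow \frac2n$ this decay becomes borderline for the condition $pm\cdot\frac{np}{4}n(\frac12-\frac1\rho)>1$ needed to make $\|u\|_{L^{pm}_tL^\rho_x}$ finite. Closing the argument uniformly for all $\frac2n<p<\frac4n$ therefore requires an optimal choice of $(q,r)$ and $\rho$, exploiting the full admissible range $2\le\rho\le\frac{2n}{n-2}$ to enlarge the decay exponent, and possibly a sharpening of the decay near the endpoint; this balancing of exponents is where the real work lies.
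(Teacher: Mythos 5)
There is a genuine gap, and it sits exactly where you place it yourself at the end: the ``balancing of exponents''. Your argument is the classical pseudoconformal-decay-plus-Strichartz scheme of Tsutsumi and Cazenave--Weissler: use $J(t)=x+2it\nabla$ and the pseudoconformal identity to get $\|u(t)\|_{L^\rho}\lesssim \langle t\rangle^{-\frac{np}{4}n(\frac12-\frac1\rho)}$, then bootstrap a global Strichartz bound for $\nabla u$ and conclude by the dual estimate. The integrability condition you need, $pm\cdot\frac{np}{4}n(\frac12-\frac1\rho)>1$, cannot be satisfied for all $\frac2n<p<\frac4n$ by any choice of admissible $(q,r)$ and $2\le\rho\le\frac{2n}{n-2}$: optimizing over the exponents reproduces precisely the threshold $p\geq p_n$ with $p_n$ as in \eqref{pnlens}, the larger root of $nx^2+(n-2)x-4=0$, which is strictly larger than $\frac2n$ in every dimension. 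So the window $\frac2n<p<p_n$ -- which is the whole point of the theorem -- is not reached, and no ``sharpening of the decay near the endpoint'' is available within this framework: the loss comes from the genuine growth $\|J(t)u\|_{L^2}\sim t^{1-np/4}$ of the pseudoconformal quantity in the defocusing subcritical case, not from a suboptimal estimate. What you have written is a correct proof for $p_n\leq p<\frac4n$ (essentially the content of the paper's appendix, there recast via the lens transform), but not of the stated theorem.

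The paper avoids Strichartz norms entirely. It takes the Yajima--Tsutsumi $L^2$ scattering \eqref{scatteringsigma} as a black box, so $\varphi_+$ exists in $L^2$ and, by the uniform $H^1$ bound from conservation laws, $\varphi_+\in H^1$ with $e^{-it\Delta}u_\varphi(t)\rightharpoonup\varphi_+$ weakly in $H^1$. Strong convergence then follows from convergence of the gradient norms, $\|\nabla u_\varphi(t)\|_{L^2}\to\|\nabla\varphi_+\|_{L^2}$, which is proved in two steps: the bound \eqref{YT} on the pseudoconformal transform gives $\|\nabla u_\varphi(t)-i\frac{x}{2t}u_\varphi(t)\|_{L^2}\to0$ (Lemma \ref{nablatilde}), reducing everything to the renormalized moment $\|\frac{x}{2t}u_\varphi(t)\|_{L^2}$; this is computed inside cones $\{|x|<Rt\}$ from the $L^2$ scattering and the free-wave profile \eqref{saymptoticbehavior}, while the exterior contribution is made uniformly small by a Gronwall argument on a truncated virial quantity (Lemma \ref{exterior}). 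Since only $L^2$ identities and the single bound \eqref{YT} are used, no time-integrability condition on $p$ arises beyond $p>\frac2n$, which is exactly what lets the paper cover the full short-range range. If you want to salvage your approach, you would have to replace the Strichartz bootstrap by this weak-convergence-plus-norm-convergence mechanism.
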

We point out that Theorem \ref{H1} is new in the sense that it covers the full set of short-range mass-subcritical nonlinearities
$\frac 2n<p<\frac 4n$, despite to previous results where only a subset of short-range nonlinearities was treated.
We quote in this direction \cite{cw}  and \cite{ts}, where
the following property (which is stronger than $H^1(\R^n)$ scattering) is proved:
\begin{equation}\label{cw}
\forall \varphi \in \Sigma_n \quad \exists \varphi_\pm \in \Sigma_n \hbox{ s.t. } \\
\|e^{-it\Delta} u_\varphi(t,x)- \varphi_\pm\|_{\Sigma_n}\overset{t\pm \infty}\longrightarrow 0, \hbox{ provided that } p_n\leq p<\frac 4n
\end{equation}
where
\begin{equation}\label{pnlens}p_n=\frac{2-n +\sqrt{n^2+12 n+4}}{2n},\end{equation}
i.e. $p_n$ is the larger root of the polynomial $nx^2+(n-2)x-4=0$ (see the appendix for a short proof of \eqref{cw} via the lens transform). One can check
that $p_n>\frac 2n$ for every $n\geq 1$ and hence the results in \cite{cw} and \cite{ts} do not cover the full set of short-range mass-subcritical nonlinearities.
Notice also that, despite the fact that \eqref{scatteringH1} and \eqref{scatteringH1equiv} are equivalent, it is not clear whether or not \eqref{cw}  implies
\begin{equation}\label{cwequ}\|u_\varphi(t,x)- e^{it\Delta}  \varphi_\pm\|_{\Sigma_n}\overset{t\rightarrow \pm \infty}\longrightarrow 0.\end{equation}
In fact it is well-known that, due to the dispersion, the $\Sigma_n$ norm grows quadratically in time along free waves and hence
the group $e^{it\Delta}$ is not uniformly bounded w.r.t. the $\Sigma_n$ topology.
Only in some very few special cases it is proved that  \eqref{cw} implies \eqref{cwequ} (see \cite{be}).
Summarizing the main point in Theorem \ref{H1} is that we cover the full range of nonlinearities
$\frac 2n< p < \frac 4n$, however our conclusion is weaker than \eqref{cw} which on the other hand is available for a more restricted set of nonlinearities.\\

We point out that our approach to prove Theorem \ref{H1} is based only on Hilbert spaces considerations and we don't rely on Strichartz estimates.
In fact Strichartz estimates in collaboration with boundedness of a family of space-time Lebesgue norms
that arise from the pseudoconformal energy, are the key tools in \cite{cw} and \cite{ts}. However in order to close the estimates, following this approach,  some restrictions
appear on the nonlinearity and hence the lower bound $p\geq p_n$ is needed.\\

We also underline that in order to prove Theorem \ref{H1} we take
the result in \cite{yt} (see \eqref{scatteringsigma}) as a black-box and we prove how to go from $L^2(\R^n)$ to $H^1(\R^n)$ convergence.
The proof of Theorem \ref{H1} is obtained as a combination of \cite{yt} and the following result.
\begin{theoreme}\label{main}
Let $\varphi\in \Sigma_n$, $0<p< \frac 4n $ and assume that there exist $\varphi_\pm\in L^2(\R^n)$ such that
\begin{equation}\label{assL2}\|u_\varphi(t,x)-e^{it\Delta} \varphi_\pm\|_{L^2(\R^n)}\overset{t \rightarrow \pm \infty} \longrightarrow  0.\end{equation}
Then we have necessarily $\varphi_\pm \in H^1(\R^n)$ and moreover
\begin{equation}\label{H1convergence}
\|u_\varphi (t,x)-e^{it\Delta} \varphi_\pm \|_{H^1(\R^n)}\overset{t \rightarrow \pm \infty} \longrightarrow  0.\end{equation}
\end{theoreme}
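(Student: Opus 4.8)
The plan is to transport everything through the free group. I treat $t\to+\infty$, the other sign being identical. Since $e^{it\Delta}$ is unitary on $H^1(\R^n)$, the conclusion \eqref{H1convergence} is equivalent, after applying $e^{-it\Delta}$, to convergence of $v(t):=e^{-it\Delta}u_\varphi(t)$ to $\varphi_\pm$ in $H^1(\R^n)$; its $L^2$ part is exactly the hypothesis \eqref{assL2}, and since $\nabla$ commutes with $e^{-it\Delta}$ the problem reduces to showing $\varphi_+\in H^1(\R^n)$ and $\nabla v(t)\to\nabla\varphi_+$ in $L^2(\R^n)$. A direct computation with the equation gives $\partial_t v=-ie^{-it\Delta}(u|u|^{p})$, so that $v$ and formally $\nabla v$ are driven by the Duhamel integral $\int_0^t e^{-is\Delta}(u|u|^p)\,ds$; the assumption \eqref{assL2} says precisely that this integral converges in $L^2$, and the whole point is to upgrade this to convergence of its gradient.

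The first ingredient I would establish is a priori decay. Conservation of \eqref{energy} keeps $\|\nabla u_\varphi(t)\|_{L^2}$ bounded, so $\nabla v(t)$ is bounded in $L^2$. To see that the potential energy actually vanishes I would use the pseudoconformal (virial) identity: the quantity $\|(x+2it\nabla)u\|_{L^2}^2+\tfrac{8t^2}{p+2}\|u\|_{L^{p+2}}^{p+2}$ has time derivative proportional to $(4-np)\,t\,\|u\|_{L^{p+2}}^{p+2}$, which in the mass-subcritical regime $p<\tfrac4n$ has a fixed sign. A Gronwall argument then controls this quantity by a power of $t$ and yields $\|u_\varphi(t)\|_{L^{p+2}}^{p+2}\lesssim\langle t\rangle^{-np/2}\to0$, where the hypothesis $\varphi\in\Sigma_n$ enters through the finiteness of $\|x\varphi\|_{L^2}$ at $t=0$. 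Feeding this into \eqref{energy} gives the key limit $\|\nabla u_\varphi(t)\|_{L^2}^2\to 2E(\varphi)$.

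Combining these, the hypothesis $v(t)\to\varphi_+$ in $L^2$ together with the uniform bound on $\nabla v(t)$ forces $\varphi_+\in H^1(\R^n)$ and $\nabla v(t)\rightharpoonup\nabla\varphi_+$ weakly in $L^2$, whence by weak lower semicontinuity $\|\nabla\varphi_+\|_{L^2}^2\le 2E(\varphi)$. At this point the problem becomes a single clean Hilbert-space assertion: since weak convergence together with convergence of norms implies strong convergence, it suffices to prove the matching bound $\|\nabla u_\varphi(t)\|_{L^2}^2\to\|\nabla\varphi_+\|_{L^2}^2$, equivalently that the $\dot H^1$ defect $\delta:=2E(\varphi)-\|\nabla\varphi_+\|_{L^2}^2=\lim_t\|\nabla(u_\varphi(t)-e^{it\Delta}\varphi_+)\|_{L^2}^2$ vanishes.

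Showing $\delta=0$ is where I expect the real difficulty, and it is here that the short-range character is forced into play. Writing $w(t)=v(t)-\varphi_+=i\int_t^\infty e^{-is\Delta}(u|u|^p)\,ds$, the defect equals $\lim_t\|\nabla w(t)\|_{L^2}^2$, so I must prove that the gradient tail $\int_t^\infty e^{-is\Delta}\nabla(u|u|^p)\,ds\to0$ in $L^2$, i.e. that no kinetic energy escapes to high frequencies. The naive estimate $\int_t^\infty\|\nabla(u|u|^p)\|_{L^2}\,ds\lesssim\int_t^\infty\|u\|_{L^\infty}^{p}\|\nabla u\|_{L^2}\,ds$ needs an integrable $L^\infty$-decay rate, which Gagliardo–Nirenberg through the operator $x+2it\nabla$ supplies only for large $p$; this is precisely the restriction $p\ge p_n$ of \cite{cw} and \cite{ts} that we aim to remove, so absolute integration must be abandoned. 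Instead I would exploit three facts at once: the strong $L^2$ convergence of $v$, which makes $|\widehat{v}(t,\xi)|^2=|\widehat{u}(t,\xi)|^2$ uniformly tight at high frequencies; the decay of the Lebesgue norms of $u_\varphi$ from the pseudoconformal control of $x+2it\nabla$; and the space-time integrability $\int_1^\infty\|u_\varphi(t)\|_{L^{p+2}}^{p+2}\,dt<\infty$, which by the rate $\langle t\rangle^{-np/2}$ holds exactly when $np>2$, i.e. in the short-range range, and which governs the oscillatory tail of $w$ through the dispersion of the free group. The crux, and the step on which I would spend most effort, is to convert the uniform high-frequency $L^2$-tightness of $v(t)$ into the corresponding $\dot H^1$-tightness, thereby excluding a persistent high-frequency kinetic bubble and forcing $\delta=0$, which yields $\varphi_+\in H^1(\R^n)$ and \eqref{H1convergence}.
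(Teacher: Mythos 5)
Your reduction is sound and, up to the point where you define the defect $\delta$, it runs parallel to the paper's argument: the paper likewise uses boundedness of $\|u_\varphi(t)\|_{H^1}$ to get $\varphi_+\in H^1$ and weak convergence of $e^{-it\Delta}u_\varphi(t)$ in $H^1$, and likewise reduces everything to the single scalar statement $\|\nabla u_\varphi(t)\|_{L^2}\to\|\nabla\varphi_+\|_{L^2}$. But the proof stops exactly where the theorem actually lives. Your last paragraph does not contain an argument: ``convert the uniform high-frequency $L^2$-tightness of $v(t)$ into the corresponding $\dot H^1$-tightness, thereby excluding a persistent high-frequency kinetic bubble'' is a restatement of the claim $\delta=0$, not a proof of it, and the three facts you propose to ``exploit at once'' are listed without any mechanism combining them. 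As written, the proposal establishes \eqref{H1convergenceweak} (convergence in $H^s$ for $s<1$, which the paper points out is easy) but not \eqref{H1convergence}. A secondary point: your intended use of $\int_1^\infty\|u_\varphi(t)\|_{L^{p+2}}^{p+2}\,dt<\infty$ requires $p>\tfrac2n$, whereas the theorem (and the paper's proof) covers all $0<p<\tfrac4n$ under the abstract hypothesis \eqref{assL2}.

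The missing idea is that the defect should be computed in physical space, not in frequency space. The pseudoconformal bound \eqref{YT} gives $\|\nabla w_\varphi(t)\|_{L^2}=O(t^{-\alpha(n,p)/2})$ with $\alpha(n,p)<2$, which translated back yields
\begin{equation}
\Big\|\nabla u_\varphi(t,x)-i\,\frac{x}{2t}\,u_\varphi(t,x)\Big\|_{L^2(\R^n)}\overset{t\rightarrow\infty}\longrightarrow 0 ,
\end{equation}
so the unbounded operator $\nabla$ may be replaced by multiplication by $\tfrac{x}{2t}$ (Lemma \ref{nablatilde} of the paper). This is what lets the $L^2$ hypothesis \eqref{assL2} be used on the gradient: inside a cone $\{|x|<Rt\}$ the weight $\tfrac{|x|}{t}$ is bounded, so \eqref{assL2} together with the stationary-phase asymptotics \eqref{saymptoticbehavior} for $e^{it\Delta}\varphi_+$ identifies the limit of $\int_{|x|<Rt}\tfrac{|x|^2}{t^2}|u_\varphi|^2$ as $4\int_{|\xi|<R/2}|\xi|^2|\hat\varphi_+|^2$; outside the cone, a Gronwall argument on $\int\varphi_R^2|w_\varphi|^2$ (using again \eqref{YT} and $w_\varphi(t)\in\Sigma_n$ for $t>0$) shows the contribution is uniformly small for $R$ large (Lemma \ref{exterior}). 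Letting $R\to\infty$ gives $\|\tfrac{x}{2t}u_\varphi(t)\|_{L^2}\to\|\nabla\varphi_+\|_{L^2}$ and hence $\delta=0$. Without a substitute for this cone decomposition and exterior-tightness estimate, your argument cannot rule out the kinetic-energy defect, which is precisely the obstruction that makes the $H^1$ statement nontrivial.
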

Notice that in Theorem \ref{main} we assume the nonlinearity $p$ to be mass-subcritical (both short-range and long-range), however
it is assumed the abstract assumption \eqref{assL2}, which is forbidden in the long-range case by \eqref{barab}
and is granted in the short-range setting by \eqref{scatteringsigma}.
We also remark that the main point in Theorem \ref{main} is \eqref{H1convergence},
on the contrary the regularity property
$\varphi_\pm \in H^1(\R^n)$ is straightforward and follows by the conservation of mass and energy
which guarantee $\sup_t \|u(t,x)\|_{H^1(\R^n)}<\infty$.
By using this fact, along with an interpolation argument, it is easy to deduce that
under the same assumptions of Theorem \ref{main} one can conclude
\begin{equation}\label{H1convergenceweak}
\|u_\varphi (t,x)-e^{it\Delta} \varphi_\pm \|_{H^{s}(\R^n)}\overset{t \rightarrow \pm \infty} \longrightarrow  0, \quad s\in [0,1).
\end{equation}
However the convergence in $H^1(\R^n)$ stated in Theorem \ref{main} is more delicate to prove and is the main contribution of the paper.\\

Next we do some considerations about the convergence of the second order moments of solutions to \eqref{u} to free waves, if the initial datum belongs to $\Sigma_n$.
We recall that the classical definition of scattering in $\Sigma_n$ (see \cite{c} and all the references therein)
is provided by \eqref{cw}, which unfortunately we are not able to show in the full set of short-range mass-subcritical nonlinearities.
Moreover, as already mentioned above it is unclear how, even if \eqref{cw} is established, one can compare the nonlinear solutions
to free waves as described in \eqref{cwequ}.
On the other hand notice that \eqref{cwequ} is a very strong request since it requires to compare asymptotically quantities which diverge for large times. In fact it is well-known
that for free waves the second order moments grow quadratically and hence the request \eqref{cwequ}
seems to be very hard to prove (in fact it is known in very few cases, see \cite{be}). On the other hand
for free waves with initial datum in $\Sigma_n$ we have
that the renormalized second order moments $\int_{\R^n} \frac{|x|^2}{t^2} |e^{it\Delta \varphi}|^2 dx$ are bounded and we have a precise limit as $t\rightarrow \pm \infty$ (see for instance \cite{tv}).
As a consequence it seems quite natural to understand whether or not
we can compare the renormalized second order moment of the nonlinear solution with the renormalized second order moment
of the free wave. The aim of next result is to show that scattering of renormalized second order moments
is equivalent to the regularity of the scattering state $\varphi_\pm$.
\begin{theoreme}\label{mainmoment}
Let $p, \varphi, \varphi_\pm$ as in Theorem \ref{main} then we have the following equivalence
$$\Big \|\frac{|x|}t (u_\varphi(t,x)- e^{it\Delta} \varphi_\pm)\Big\|_{L^2(\R^n)}\overset{t\rightarrow \pm\infty} \longrightarrow 0 \iff \varphi_\pm\in \Sigma_n.$$
\end{theoreme}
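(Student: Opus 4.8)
The plan is to transfer everything onto the vector field $J(t)=x+2it\nabla$, exploited through the two identities $J(t)=e^{it\Delta}\,x\,e^{-it\Delta}$ and $\frac{x}{t}=\frac{J(t)}{t}-2i\nabla$. Set $h(t)=u_\varphi(t)-e^{it\Delta}\varphi_\pm$. The second identity gives $\frac{x}{t}h=\frac1t J(t)h-2i\nabla h$, and Theorem \ref{main} provides $\|\nabla h(t)\|_{L^2}\to0$; since $\nabla h(t)\in L^2$ for every $t$, the renormalized moment $\|\frac{|x|}{t}h(t)\|_{L^2}$ is finite iff $\frac1t\|J(t)h(t)\|_{L^2}$ is, and in that case the two differ by at most $2\|\nabla h(t)\|_{L^2}\to0$. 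Using $J(t)e^{it\Delta}\varphi_\pm=e^{it\Delta}(x\varphi_\pm)$, $J(t)u_\varphi=e^{it\Delta}(x\,e^{-it\Delta}u_\varphi)$ and the $L^2$-isometry property of $e^{it\Delta}$, one gets $\frac1t\|J(t)h(t)\|_{L^2}=\frac1t\|x(e^{-it\Delta}u_\varphi(t)-\varphi_\pm)\|_{L^2}$. Hence it suffices to prove that this last quantity tends to $0$ iff $\varphi_\pm\in\Sigma_n$.

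The core of the argument is the estimate
$$\frac1t\|J(t)u_\varphi(t)\|_{L^2}\overset{t\to\pm\infty}{\longrightarrow}0,$$
which must hold \emph{without} any assumption on $\varphi_\pm$. By standard persistence of the $\Sigma_n$ regularity (Cazenave--Weissler) one has $u_\varphi(t)\in\Sigma_n$, so $J(t)u_\varphi(t)\in L^2$ and the defocusing pseudoconformal identity
$$\frac{d}{dt}\Big[\|J(t)u_\varphi\|_{L^2}^2+\frac{8t^2}{p+2}\|u_\varphi\|_{L^{p+2}}^{p+2}\Big]=\frac{4(4-np)}{p+2}\,t\,\|u_\varphi\|_{L^{p+2}}^{p+2}$$
is available. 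In the mass-subcritical regime $4-np>0$. The decisive input is the dispersive decay $\|u_\varphi(t)\|_{L^{p+2}}\to0$: it follows from the $H^1$ scattering of Theorem \ref{main} together with the fact that $\|e^{it\Delta}\psi\|_{L^{p+2}}\to0$ for every $\psi\in H^1$ (density of Schwartz functions plus the free dispersive estimate, using that $p+2$ is $H^1$-subcritical, i.e. $p+2<\frac{2n}{n-2}$ when $n\ge3$). Writing $g(t)=\|u_\varphi(t)\|_{L^{p+2}}^{p+2}\to0$ and integrating the identity from $0$ to $t$ (for $t>0$, the case $t<0$ being symmetric), the elementary bound $\int_0^t s\,g(s)\,ds=o(t^2)$ shows that the bracketed quantity is $o(t^2)$; as the potential term is nonnegative, $\|J(t)u_\varphi\|_{L^2}^2=o(t^2)$, which is the claim.

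Granting the core estimate, both implications follow at once. If $\varphi_\pm\in\Sigma_n$ then $\|x\varphi_\pm\|_{L^2}<\infty$ while $\|x\,e^{-it\Delta}u_\varphi\|_{L^2}=\|J(t)u_\varphi\|_{L^2}$, so
$$\frac1t\|x(e^{-it\Delta}u_\varphi-\varphi_\pm)\|_{L^2}\le\frac1t\|J(t)u_\varphi\|_{L^2}+\frac{\|x\varphi_\pm\|_{L^2}}{t}\longrightarrow0,$$
and the reduction of the first paragraph yields $\|\frac{|x|}{t}h(t)\|_{L^2}\to0$. Conversely, if $\varphi_\pm\notin\Sigma_n$ then $x\varphi_\pm\notin L^2$, whereas $x\,e^{-it\Delta}u_\varphi(t)\in L^2$ for each $t$ because $u_\varphi(t)\in\Sigma_n$; therefore $x(e^{-it\Delta}u_\varphi-\varphi_\pm)\notin L^2$, so $\frac1t\|J(t)h(t)\|_{L^2}=+\infty$ and, since $\nabla h(t)\in L^2$, also $\|\frac{|x|}{t}h(t)\|_{L^2}=+\infty$ for every $t$; in particular it does not tend to $0$.

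The main obstacle is precisely the core estimate $\frac1t\|J(t)u_\varphi\|_{L^2}\to0$. It cannot be obtained by comparison with the free wave $e^{it\Delta}\varphi_\pm$ (for which, when $\varphi_\pm\notin\Sigma_n$, the analogous quantity is identically $+\infty$), so the only available mechanism is the gain carried by the \emph{defocusing} pseudoconformal identity; and upgrading the resulting a priori bound from $O(t^2)$ to genuine $o(t^2)$ hinges entirely on the dispersive decay $\|u_\varphi(t)\|_{L^{p+2}}\to0$, which is where the scattering hypothesis of Theorem \ref{main} enters in an essential way.
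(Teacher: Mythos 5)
Your proof is correct. The forward implication (convergence of the renormalized moment forces $\varphi_\pm\in\Sigma_n$) is essentially the paper's own argument: finiteness of the weighted norm at a single time, combined with $u_\varphi(t)\in\Sigma_n$ and the invariance of $\Sigma_n$ under $e^{it\Delta}$, forces $x\varphi_\pm\in L^2$. For the converse the two arguments genuinely diverge. The paper splits $\R^n$ into the interior of the cone $|x|<Rt$, where the weight $\frac{|x|}{t}$ is bounded and the $L^2$ hypothesis \eqref{assL2} suffices, and its exterior, where it invokes Lemma \ref{exterior} both for the nonlinear wave and for the free wave $e^{it\Delta}\varphi_\pm$ (the latter using $\varphi_\pm\in\Sigma_n$). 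You instead work with the Galilean field $J(t)=x+2it\nabla$ and reduce everything to the single estimate $\frac 1t\|J(t)u_\varphi\|_{L^2(\R^n)}\to 0$, after which the conclusion is a triangle inequality using $\frac 1t\|x\varphi_\pm\|_{L^2(\R^n)}\to 0$ and $\|\nabla(u_\varphi-e^{it\Delta}\varphi_\pm)\|_{L^2(\R^n)}\to 0$ from Theorem \ref{main}; this buys you a proof with no cutoff, no cone decomposition and no linear analogue of Lemma \ref{exterior}. Two remarks. First, your core estimate is exactly Lemma \ref{nablatilde} in disguise, since $\frac{1}{2t}\|J(t)u_\varphi\|_{L^2(\R^n)}=\|\nabla u_\varphi-i\frac{x}{2t}u_\varphi\|_{L^2(\R^n)}$, so you could simply have quoted that lemma. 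Second, your derivation of it via the pseudoconformal conservation law together with the decay $\|u_\varphi(t)\|_{L^{p+2}(\R^n)}\to 0$ is valid (that decay does follow from Theorem \ref{main}, Sobolev embedding and the dispersive estimate, and can also be read off directly from \eqref{YT}), but it uses more than necessary: the bound \eqref{YT} already yields the quantitative rate $\|J(t)u_\varphi\|_{L^2(\R^n)}=O(t^{\alpha(n,p)/2})=o(t)$ without any appeal to scattering, which is precisely how the paper obtains Lemma \ref{nablatilde}.
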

Unfortunately we can prove the property $\varphi_\pm\in \Sigma_n$ only for a subset of short-range mass-subcritical NLS, namely the ones treated
in the references \cite{cw}, \cite{ts}. Indeed once \eqref{cw} is established we get for free $\varphi_\pm\in \Sigma_n$ provided that $\varphi\in \Sigma_n$.
We believe that the property $\varphi_\pm\in \Sigma_n$, which appears in Theorem \ref{mainmoment}, is an interesting question of intermediate difficulty compared with the proof of scattering in $\Sigma_n$
as described in \eqref{cw}. We think it deserves to be investigated in the full set of short-range mass-subcritical nonlinearities.
\\

The last result that we present concerns a further equivalent formulation of the regularity condition $\varphi_\pm\in \Sigma_n$
which has appeared in Theorem \ref{mainmoment}. First of all we introduce
the pesudo-conformal transformation of $u_\varphi(t,x)$ defined as follows:
\begin{equation}\label{defpseudo}
w_\varphi(t,x)=\frac{1}{t^{n/2}} \bar u_\varphi(\frac 1t, \frac xt) e^{i\frac{|x|^2}{4t}}.\end{equation}
The key point in \cite{yt} is the proof of the existence
of the functions $w_\varphi^\pm \in L^2(\R^n)$ such that
\begin{equation}\label{YTL2}
\|w_\varphi(t,x)-w_\varphi^\pm \|_{L^2(\R^n)}\overset{t\rightarrow 0^\pm} \longrightarrow 0.\end{equation}
Recall that we already have by Theorem \ref{main} the property $\varphi_\pm \in H^1(\R^n)$, then
showing $\varphi_\pm\in \Sigma_n$ is equivalent to showing $\int_{\R^n} |x|^2 |\varphi_\pm|^2 dx<\infty$. Next theorem is a further characterization of this property
in terms of the regularity of $w_\varphi^\pm$ that appear in \eqref{YTL2}.
\begin{theoreme}\label{equivreg}
Let $p, \varphi, \varphi_\pm$ as in Theorem \ref{main} and let $w_\varphi^\pm$ be given by \eqref{YTL2},
then we have the following equivalence
$$\int_{\R^n} |x|^2 |\varphi_\pm|^2 dx<\infty \iff w_\varphi^\pm \in \dot H^1(\R^n).$$
\end{theoreme}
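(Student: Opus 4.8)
The plan is to exploit that the pseudoconformal transform is, for each fixed time, an antilinear isometry of $L^2(\R^n)$, and then to compute explicitly how it acts on a free wave. Introduce the antilinear operator $P_t g(x)=t^{-n/2}\,\overline{g}(x/t)\,e^{i|x|^2/(4t)}$, so that \eqref{defpseudo} reads $w_\varphi(t,\cdot)=P_t[u_\varphi(1/t,\cdot)]$ and a direct change of variables gives $\|P_t g\|_{L^2}=\|g\|_{L^2}$. As $t\to 0^+$ we have $1/t\to+\infty$, so the hypothesis \eqref{assL2} yields $\|u_\varphi(1/t,\cdot)-e^{\frac{i}{t}\Delta}\varphi_+\|_{L^2}\to 0$ (here $e^{\frac{i}{t}\Delta}$ is the free evolution at time $1/t$); applying the isometry $P_t$, which is additive since it is antilinear, gives
\[ \big\|w_\varphi(t,\cdot)-P_t\big[e^{\frac{i}{t}\Delta}\varphi_+\big]\big\|_{L^2}\overset{t\to 0^+}\longrightarrow 0. \]
Comparing with the convergence $w_\varphi(t,\cdot)\to w_\varphi^+$ from \eqref{YTL2}, the whole problem reduces to identifying the $L^2$-limit of $P_t[e^{\frac{i}{t}\Delta}\varphi_+]$ as $t\to0^+$.

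Next I would insert the standard factorization $e^{is\Delta}=c_n\, M_s D_s \F M_s$ of the free evolution, where $M_s$ is multiplication by $e^{i|\cdot|^2/(4s)}$, $D_s g=(2is)^{-n/2}g(\cdot/(2s))$ is the dilation, $\F$ is the Fourier transform and $c_n$ a dimensional constant. Writing $s=1/t$, the quadratic phase generated by the outer $M_s$ cancels exactly against the phase $e^{i|x|^2/(4t)}$ built into $P_t$, and the dilations combine so that, after a short computation, all the $t$-dependent prefactors disappear and one is left with the exact identity
\[ P_t\big[e^{\frac{i}{t}\Delta}\varphi_+\big]=c\,\overline{\big(\F[M_{1/t}\varphi_+]\big)(\cdot/2)}, \]
for a dimensional constant $c$. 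Since $|e^{i|y|^2/(4s)}-1|\le 2$ and tends to $0$ pointwise as $s\to+\infty$, dominated convergence gives $M_{1/t}\varphi_+\to\varphi_+$ in $L^2(\R^n)$ as $t\to 0^+$; as $\F$ and the rescaling are continuous on $L^2$, the right-hand side converges to $c\,\overline{\hat\varphi_+(\cdot/2)}$. Combined with the previous paragraph this identifies the scattering profile,
\[ w_\varphi^+=c\,\overline{\hat\varphi_+(\cdot/2)}\quad\text{in } L^2(\R^n), \]
and symmetrically $w_\varphi^-=c\,\overline{\hat\varphi_-(\cdot/2)}$.

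With this explicit identity the stated equivalence is immediate. Differentiating gives $\nabla w_\varphi^+=\tfrac{c}{2}\,\overline{(\nabla\hat\varphi_+)(\cdot/2)}$, so $w_\varphi^+\in\dot H^1(\R^n)$ if and only if $\nabla\hat\varphi_+\in L^2(\R^n)$; and since $\partial_{\xi_j}\hat\varphi_+=-i\,\widehat{x_j\varphi_+}$, Plancherel yields $\|\nabla\hat\varphi_+\|_{L^2}^2=c_n'\int_{\R^n}|x|^2|\varphi_+|^2\,dx$. This gives exactly $w_\varphi^+\in\dot H^1(\R^n)\iff\int_{\R^n}|x|^2|\varphi_+|^2\,dx<\infty$, and likewise for the $-$ sign.

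The one genuinely analytic step is the middle one: establishing the factorization and carrying out the exact cancellation of the quadratic phases. I expect this to be the main point to get right, but once the factorization is in place it is essentially algebraic, since the whole $t$-dependence is concentrated in the inner multiplier $M_{1/t}$ and the only real limit used is the elementary $\|M_{1/t}\varphi_+-\varphi_+\|_{L^2}\to 0$. I also note that the exponent $p$ and the nonlinearity enter only through the hypotheses imported from Theorem \ref{main} and \eqref{YTL2}; the argument itself is a statement about free waves and the pseudoconformal transform, which is why no restriction on $p$ beyond $0<p<\tfrac 4n$ is needed.
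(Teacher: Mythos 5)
Your proof is correct and follows essentially the same route as the paper: both arguments hinge on identifying the profile via $\hat\varphi_+(\xi)=(2i)^{n/2}\bar w_\varphi^+(2\xi)$ (the paper's \eqref{immid}) and then conclude by Plancherel, trading the weight $|x|^2$ on $\varphi_+$ for $\dot H^1$ regularity of $w_\varphi^+$. The only difference is organizational: you compute the limit of $P_t\big[e^{\frac it\Delta}\varphi_+\big]$ directly from the exact $M_sD_s\F M_s$ factorization, whereas the paper reads the same identity off the Dollard asymptotic \eqref{saymptoticbehavior} applied to a candidate $h_+$ defined from $w_\varphi^+$ and then invokes uniqueness of the $L^2$ scattering state to conclude $h_+=\varphi_+$.
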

Roughly speaking by Theorem \ref{equivreg} the property $\varphi_\pm\in \Sigma_n$ is reduced to study up to the time $t=0$ the $H^1(\R^n)$ regularity of solutions
to \eqref{w} (which is the partial differential equation solved by $w_\varphi(t,x)$) with initial condition at time $t=1$ such that $w_\varphi(1,x)\in \Sigma_n$.
\\

We conclude the introduction by quoting the papers \cite{kmmv} and \cite{kmmv1} where the question of scattering theory is studied in negative Sobolev spaces
for a family of long-range mass-subcritical nonlinearities. In particular a series of conditional scattering results are achieved in the aforementioned papers.
We finally mention \cite{bt} where the authors prove in dimensione $n=1$ new probabilistic results about scattering and smoothing effect of the scattering states in weighted negative Sobolev spaces in the mass-subcritical short-range regime. The result has been extended in higher dimensions under the radiality condition in \cite{l}.
\\

{\bf Acknowledgement.} The authors are grateful to Thierry Cazenave for interesting discussions during the preparation
of this paper.
{\color{red}

}

\section{The pseudo-conformal transformation}

Let $u_\varphi(t,x)$ the unique global solution to \eqref{u} with initial condition $\varphi\in \Sigma_n$, then following \cite{yt} we introduce the pseudo-conformal transformation
$w_\varphi(t,x)$ defined by \eqref{defpseudo}.
Notice that $w_\varphi(t,x)$ is well-defined for $(t,x)\in (0,\infty)\times \R^n$ and $(t,x)\in (-\infty,0)\times \R^n$.
We shall focus mainly on the restriction of $w_\varphi (t,x)$ on the strip $(t,x)\in (0,\infty)\times \R^n$, which is of importance in order to prove Theorem \ref{main}
as $t\rightarrow \infty$, by a similar argument we can treat the case $t\rightarrow -\infty$ by using the restriction of $w_\varphi (t,x)$ on the strip
$(t,x)\in (-\infty,0)\times \R^n$.
One can check by direct computation that $w_\varphi(t,x)$ is solution to the following partial differential equation:
\begin{equation}\label{w}
i\partial_t w_\varphi  +\Delta w_\varphi  - t^{-\alpha(n,p)}w_\varphi |w_\varphi|^{p}=0, \quad (t, x)\in (0,\infty) \times \R^n, \quad \alpha(n,p)=2-\frac {np}2.
\end{equation}
Notice that in the regime of short-range nonlinearity we have that
$t^{-\alpha(n,p)}\in L^1(0,1)$ and in the regime of long-range nonlinearity we have that
$t^{-\alpha(n,p)}\notin L^1(0,1)$. Hence the nonlinearity $p=\frac 2n$ is borderline to guarantee
local integrability in a neighborhood of the origin of the weight $t^{-\alpha(n,p)}$ which appears in front of the nonlinearity in \eqref{w}.
As already mentioned in the introduction, the key idea in \cite{yt} is to deduce the $L^2(\R^n)$ scattering property for the solution $u_\varphi(t,x)$ as $t\rightarrow \infty$
by showing that the following  limit exists
$$\lim_{t\rightarrow 0^+ } w_\varphi(t,x) \hbox{ in } L^2(\R^n).$$ Notice that even if  $w_\varphi(t,x)\in \Sigma_n$ for $t\neq 0$,
it is not well-defined at $t=0$ and hence to show the existence of the limit above
as $t\rightarrow 0^+ $ is not obvious.
In order to achieve this property in \cite{yt} it is first proved that the limit above exists in $L^2(\R^n)$ in the weak sense, and then in a second step
the convergence is up-graded to strong
convergence in $L^2(\R^n)$.\\

We collect in the next proposition the key properties of $w_\varphi(t,x)$ that will be useful in the sequel.
\begin{proposition}
Let $\varphi\in \Sigma_n$, $0<p<\frac 4{n-2}$ for $n\geq 3$ and $0<p<\infty$ for $n=1,2$. Let $w_\varphi(t,x)$ be the pseudoconformal transformation associated with $u_\varphi(t,x)$ as in \eqref{defpseudo}, then we have
the following properties:
\begin{equation}\label{sigmaw}
w_\varphi(t,x)\in C((0,\infty);\Sigma_n)
\end{equation}
and
\begin{equation}\label{YTeq}
t^{\alpha(n,p)} \|\nabla w_\varphi(t,x)\|_{L^2(\R^n)}^2 + \|w_\varphi(t,x)\|_{L^{p+2}(\R^n)}^{p+2}
=\|\nabla w_\varphi(t,x)\|_{L^2}^2 \frac d{dt} t^{\alpha(n,p)}>0.
\end{equation}
In particular for $0<p<\frac 4n$ we have
\begin{equation}\label{YT}
\sup_{t\in (0,1]} \Big (t^{\alpha(n,p)} \|\nabla w_\varphi(t,x)\|_{L^2(\R^n)}^2 + \|w_\varphi(t,x)\|_{L^{p+2}(\R^n)}^{p+2}\Big)<\infty.
\end{equation}
\end{proposition}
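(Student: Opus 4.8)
The plan is to prove the three assertions \eqref{sigmaw}, \eqref{YTeq} and \eqref{YT} in this order, reducing each to either the classical well-posedness of \eqref{u} in $\Sigma_n$ or to a time-dependent energy law for the transformed equation \eqref{w}. For \eqref{sigmaw} I would first invoke the classical fact (see \cite{c}) that $\varphi\in\Sigma_n$ implies $u_\varphi\in C(\R;\Sigma_n)$, so that $s\mapsto xu_\varphi(s)$ and $s\mapsto\nabla u_\varphi(s)$ are continuous with values in $L^2(\Rn)$ and hence so is $s\mapsto J(s)u_\varphi(s)$, where $J(s)=x+2is\nabla$. A direct computation from \eqref{defpseudo} gives the transformation identities $\|w_\varphi(t)\|_{L^2}=\|u_\varphi(1/t)\|_{L^2}$, $\|xw_\varphi(t)\|_{L^2}=t\,\|xu_\varphi(1/t)\|_{L^2}$ and $\|\nabla w_\varphi(t)\|_{L^2}=\tfrac12\|J(1/t)u_\varphi(1/t)\|_{L^2}$, together with their vector-valued refinements. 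Writing $w_\varphi(t)=\mathcal{T}(t)\big[u_\varphi(1/t)\big]$, where $\mathcal{T}(t)$ is the conjugation composed with the scaling $f\mapsto t^{-n/2}f(\cdot/t)$ and the phase multiplication $f\mapsto e^{i|x|^2/(4t)}f$, each of these operations is jointly continuous in $(t,f)\in(0,\infty)\times\Sigma_n$ (the phase term because $xf\in L^2(\Rn)$ for $f\in\Sigma_n$). Since $t\mapsto 1/t$ is a homeomorphism of $(0,\infty)$, composing with the continuous map $s\mapsto u_\varphi(s)$ yields \eqref{sigmaw}.

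For the energy law \eqref{YTeq} I would work directly with \eqref{w}, a defocusing NLS with smooth positive coupling $g(t)=t^{-\alpha(n,p)}$. Pairing \eqref{w} with $\partial_t\bar w_\varphi$ in $L^2(\Rn)$ and taking real parts, the term $\Re\int i|\partial_t w_\varphi|^2$ vanishes, an integration by parts turns $\Re\int\Delta w_\varphi\,\partial_t\bar w_\varphi$ into $-\tfrac12\frac{d}{dt}\|\nabla w_\varphi\|_{L^2}^2$, and the identity $\Re(w_\varphi|w_\varphi|^p\partial_t\bar w_\varphi)=\tfrac{1}{p+2}\partial_t|w_\varphi|^{p+2}$ turns the nonlinear term into $\tfrac{g(t)}{p+2}\frac{d}{dt}\|w_\varphi\|_{L^{p+2}}^{p+2}$. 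This gives $\frac{d}{dt}\|\nabla w_\varphi\|_{L^2}^2=-\frac{2g(t)}{p+2}\frac{d}{dt}\|w_\varphi\|_{L^{p+2}}^{p+2}$; inserting this into $\frac{d}{dt}\big(t^{\alpha(n,p)}\|\nabla w_\varphi\|_{L^2}^2\big)$ and using $g(t)\,t^{\alpha(n,p)}=1$, the two potential-energy contributions cancel and one is left with the law $\frac{d}{dt}\big(t^{\alpha(n,p)}\|\nabla w_\varphi\|_{L^2}^2+\tfrac{2}{p+2}\|w_\varphi\|_{L^{p+2}}^{p+2}\big)=\|\nabla w_\varphi\|_{L^2}^2\,\frac{d}{dt}t^{\alpha(n,p)}$, which is the content of \eqref{YTeq} and is manifestly nonnegative.

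The main obstacle is the rigorous justification of this computation: for merely $\Sigma_n$-valued (hence $H^1$) solutions, $\partial_t w_\varphi$ only lives in $H^{-1}(\Rn)$, so the pairing against $\partial_t\bar w_\varphi$ and the differentiation of $t\mapsto\|\nabla w_\varphi(t)\|_{L^2}^2$ and $t\mapsto\|w_\varphi(t)\|_{L^{p+2}}^{p+2}$ are not literally meaningful. I would handle this in the standard way: first establish the identity for smooth, fast-decaying data $\varphi$, for which $w_\varphi$ is smooth on $(0,\infty)$ and every manipulation above is classical, and then pass to the limit for general $\varphi\in\Sigma_n$ by continuous dependence of $u_\varphi$ in $\Sigma_n$ together with \eqref{sigmaw}. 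It is cleaner to prove the integrated version of the identity on each $[t_1,t_2]\subset(0,\infty)$, so that no pointwise time derivative survives in the limiting argument.

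Finally, for \eqref{YT} I observe that $0<p<\frac4n$ forces $\alpha(n,p)=2-\frac{np}{2}>0$, hence $\frac{d}{dt}t^{\alpha(n,p)}=\alpha(n,p)\,t^{\alpha(n,p)-1}>0$ on $(0,\infty)$, and \eqref{YTeq} shows that $\Phi(t):=t^{\alpha(n,p)}\|\nabla w_\varphi(t)\|_{L^2}^2+\tfrac{2}{p+2}\|w_\varphi(t)\|_{L^{p+2}}^{p+2}$ is nondecreasing. Therefore $\sup_{t\in(0,1]}\Phi(t)\le\Phi(1)$, and $\Phi(1)<\infty$ because $w_\varphi(1,\cdot)\in\Sigma_n$ by \eqref{sigmaw}, which embeds into $H^1(\Rn)\cap L^{p+2}(\Rn)$ in the admissible range (Sobolev embedding, using $p+2\le\frac{2n}{n-2}$ when $n\ge3$, and $p+2<\infty$ when $n=1,2$). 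Since the two summands appearing in \eqref{YT} differ from those of $\Phi$ only by fixed positive constants, this yields the claimed uniform bound.
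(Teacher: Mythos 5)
Your proof is correct and follows the same route the paper sketches: \eqref{sigmaw} from the definition of the pseudoconformal transform together with $u_\varphi\in C((0,\infty);\Sigma_n)$, and \eqref{YTeq}--\eqref{YT} by multiplying \eqref{w} by $t^{\alpha(n,p)}$, pairing with $\partial_t\bar w_\varphi$, integrating by parts and taking real parts, then exploiting the resulting monotonicity on $(0,1]$. You also correctly restore the time derivative $\frac{d}{dt}$ that is typographically missing on the left-hand side of \eqref{YTeq}, and the approximation step justifying the formal energy computation for merely $\Sigma_n$-valued solutions is a detail the paper leaves implicit.
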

The estimate \eqref{YT} plays a fundamental role in \cite{yt} and will be of crucial importance in the sequel
along the proof of Theorem \ref{main}. The basic idea to establish \eqref{YT} is to multiply first the equation \eqref{w} by $t^{\alpha(n,p)}$ and
then in a second step the corresponding equation is tested with the function $\partial_t \bar w_\varphi(t,x)$. Then the proof follows by integration by parts
and by considering the real part of the identity obtained. Concerning the property
\eqref{sigmaw} follows from the definition of $w_\varphi(t,x)$
and from the fact that $\varphi \in \Sigma_n$  implies $u_\varphi(t,x)\in C((0,\infty);\Sigma_n)$ (see \cite{c} for a proof of this fact).

\section{Proof of Theorem \ref{main}}

We shall treat in details the case $t\rightarrow \infty$ (by a similar argument one can treat $t\rightarrow - \infty$).
Recall that $e^{it\Delta}$ is a family of isometries in $H^1(\R^n)$ then \eqref{H1convergence}
is equivalent to show that
\begin{equation}\label{heart}
e^{-it\Delta} u(t,x)\overset{t\rightarrow \infty} \longrightarrow \varphi_+ \hbox{ in } H^1(\R^n).\end{equation}
Notice that by assumption $\varphi_+$ belongs to $\L^2(\R^n)$.
In the first elementary lemma below we show that indeed
$\varphi_+\in H^1(\R^n)$ and moreover
\begin{equation}\label{heartweak}
e^{-it\Delta} u(t,x)\overset{t\rightarrow \infty} \rightharpoonup  \varphi_+ \hbox{ in } H^1(\R^n).\end{equation}
We claim that \eqref{heart} follows provided that we show
\begin{equation}\label{convnormgrad}
\|\nabla (e^{-it\Delta} u_\varphi(t,x))\|_{L^2(\R^n)}\overset{t\rightarrow \infty}\longrightarrow  \|\nabla \varphi_+\|_{L^2(\R^n)}.\end{equation}
In fact by combining \eqref{convnormgrad} with the following convergence
$$\|e^{-it\Delta} u_\varphi(t,x)\|_{L^2(\R^n)}\overset{t\rightarrow \infty}\longrightarrow  \|\varphi_+\|_{L^2(\R^n)}$$
(which in turn follows from \eqref{assL2}) we get
\begin{equation}\label{convnorm}
\|e^{-it\Delta} u_\varphi(t,x)\|_{H^1(\R^n)}\overset{t\rightarrow \infty}\longrightarrow  \|\varphi_+\|_{H^1(\R^n)}.\end{equation}
Then we have weak convergence in $H^1(\R^n)$ by \eqref{heartweak} and convergence of the norms by \eqref{convnorm},
which together imply strong convergence in $H^1(\R^n)$, namely \eqref{heart}.
Hence the main difficulty it to establish \eqref{convnormgrad} which will follow from two separate lemmas, Lemma \ref{nablatilde} and Lemma \ref{exterior},
plus an extra argument that we partially borrow from \cite{tv}, where the precise long time behavior of moments is considered for a family of mass supercritical NLS.
\begin{lemme}\label{weakconv}
Under the assumptions of Theorem \ref{main} we have $\varphi_+\in H^1(\R^n)$ and $e^{-it \Delta} (u_\varphi(t,x)) \overset{t\rightarrow \infty} \rightharpoonup \varphi_+$ in $H^1(\R^n)$.
\end{lemme}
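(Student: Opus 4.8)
The plan is to combine the uniform $H^1$ bound provided by the conservation laws with the hypothesis \eqref{assL2}, identifying the weak $H^1$ limit through the $L^2$ pairing. Throughout write $v(t):=e^{-it\Delta}u_\varphi(t,x)$. \emph{Step 1 (a priori bound).} Since \eqref{u} is defocusing and $0<p<\frac 4n$ is mass-subcritical, conservation of mass together with conservation of the energy \eqref{energy} yields $\sup_t\|u_\varphi(t,\cdot)\|_{H^1(\R^n)}=:M<\infty$. As $e^{it\Delta}$ is an isometry of $H^1(\R^n)$, we get $\sup_t\|v(t)\|_{H^1(\R^n)}=M<\infty$, so $\{v(t)\}_t$ is bounded in the Hilbert space $H^1(\R^n)$.

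\emph{Step 2 (identification of the limit, and $\varphi_+\in H^1$).} Fix a sequence $t_k\to\infty$. By reflexivity of $H^1(\R^n)$ there are a subsequence (not relabelled) and $g\in H^1(\R^n)$ with $v(t_k)\rightharpoonup g$ weakly in $H^1(\R^n)$. For $\psi\in C_0^\infty(\R^n)$ put $h:=(1-\Delta)^{-1}\psi\in H^2(\R^n)\subset H^1(\R^n)$; integrating by parts one has $\langle f,h\rangle_{H^1}=\langle f,(1-\Delta)h\rangle_{L^2}=\langle f,\psi\rangle_{L^2}$ for every $f\in H^1(\R^n)$. Testing the weak $H^1$ convergence against such $h$ therefore gives $\langle v(t_k),\psi\rangle_{L^2}\to\langle g,\psi\rangle_{L^2}$, while \eqref{assL2} gives $v(t)\to\varphi_+$ strongly in $L^2(\R^n)$, hence $\langle v(t_k),\psi\rangle_{L^2}\to\langle\varphi_+,\psi\rangle_{L^2}$. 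Thus $\langle g,\psi\rangle_{L^2}=\langle\varphi_+,\psi\rangle_{L^2}$ for all $\psi\in C_0^\infty(\R^n)$, so $g=\varphi_+$ and in particular $\varphi_+=g\in H^1(\R^n)$, which is the first assertion of the lemma.

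\emph{Step 3 (convergence of the whole family).} Now that $\varphi_+\in H^1(\R^n)$ is known, I upgrade the subsequential statement to \eqref{heartweak}. Since $\{v(t)\}_t$ is bounded in $H^1(\R^n)$ and $C_0^\infty(\R^n)$ is dense in $H^1(\R^n)$, it suffices to prove $\langle v(t),h\rangle_{H^1}\to\langle\varphi_+,h\rangle_{H^1}$ for $h\in C_0^\infty(\R^n)$; for such $h$ this equals $\langle v(t),(1-\Delta)h\rangle_{L^2}\to\langle\varphi_+,(1-\Delta)h\rangle_{L^2}=\langle\varphi_+,h\rangle_{H^1}$, where the convergence is the strong $L^2$ convergence of \eqref{assL2} against the fixed function $(1-\Delta)h\in L^2(\R^n)$ and the last identity is the integration by parts of Step 2, now legitimate because $\varphi_+\in H^1(\R^n)$. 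A routine density and equiboundedness argument extends this to every $h\in H^1(\R^n)$, giving $v(t)\rightharpoonup\varphi_+$ in $H^1(\R^n)$.

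None of the three steps is a genuine obstacle; this is exactly the elementary lemma announced in the text, and the only point demanding care is the bookkeeping that converts the weak $H^1$ pairing into the weak $L^2$ pairing via the isomorphism $1-\Delta\colon H^1(\R^n)\to H^{-1}(\R^n)$, used first to recognise $\varphi_+$ as an $H^1$ function and then to pass to the limit against a dense class. The real content of Theorem \ref{main}, namely the convergence of the gradient norms \eqref{convnormgrad}, is deferred to the subsequent lemmas and is where the pseudo-conformal estimate \eqref{YT} will be essential.
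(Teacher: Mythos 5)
Your proposal is correct and follows essentially the same route as the paper: a uniform $H^1$ bound from conservation of mass and energy, combined with the strong $L^2$ convergence from \eqref{assL2}, yields $\varphi_+\in H^1(\R^n)$ and the weak $H^1$ convergence. You merely spell out the standard functional-analytic details (subsequence extraction, identification of the limit via the $L^2$ pairing, and the density argument) that the paper leaves implicit.
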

\begin{proof}
By conservation of the mass and energy (see \eqref{energy}) we have $\sup_t \|u_\varphi(t, x)\|_{H^1(\R^n)}<\infty$ and hence
\begin{equation}\label{unifbound}\sup_t \|e^{-it \Delta} u_\varphi(t,x)\|_{H^1(\R^n)}<\infty.\end{equation}
On the other hand the  assumption \eqref{assL2} implies
$$\|e^{-it \Delta} u_\varphi(t,x)-\varphi_+\|_{L^2(\R^n)}=\|u_\varphi(t,x)- e^{it\Delta} \varphi_+\|_{L^2(\R^n)}\overset{t\rightarrow \infty} \longrightarrow 0,$$
namely we have convergence of $e^{-it \Delta} u_\varphi(t,x)$ to $\varphi_+$ in $L^2(\R^n)$. By combining this fact with \eqref{unifbound}
we conclude $\varphi_+\in H^1(\R^n)$, as well as the weak convergence of $e^{-it \Delta} (u_\varphi(t,x))$ to $\varphi_+$
in $H^1(\R^n)$.
\end{proof}

\begin{lemme}\label{nablatilde}
Under the assumptions of Theorem \ref{main} we have the following property:
$$\Big \|\nabla u_\varphi(t,x)-i \frac {x}{2t} u_\varphi(t,x)\Big \|_{L^2(\R^n)}\overset{t\rightarrow \infty} \longrightarrow 0.$$
\end{lemme}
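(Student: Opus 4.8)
The plan is to recognize the quantity $\nabla u_\varphi-i\frac{x}{2t}u_\varphi$ as a rescaling of the spatial gradient of the pseudo-conformal transform $w_\varphi$ evaluated at the reciprocal time, and then to invoke the a priori bound \eqref{YT}. Precisely, I claim the exact identity
\[
\Big\|\nabla u_\varphi(t,\cdot)-i\frac{x}{2t}u_\varphi(t,\cdot)\Big\|_{L^2(\R^n)}=\frac1t\,\big\|\nabla w_\varphi(1/t,\cdot)\big\|_{L^2(\R^n)},\qquad t>0,
\]
after which the statement is immediate from the decay of $\|\nabla w_\varphi(s)\|_{L^2}$ as $s\to0^+$ furnished by \eqref{YT}.

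To establish this identity I would simply differentiate the definition \eqref{defpseudo}. Writing $s=1/t$ and $y=x/t$, the product rule applied to $w_\varphi(t,x)=t^{-n/2}\bar u_\varphi(1/t,x/t)e^{i|x|^2/(4t)}$ produces one term from the gradient hitting $\bar u_\varphi$ (which brings down a factor $1/t$) and one term from the phase (which brings down $i\,x/(2t)$). Collecting them gives
\[
\nabla_x w_\varphi(t,x)=t^{-n/2-1}e^{i|x|^2/(4t)}\,\overline{\Big[(\nabla u_\varphi)(s,y)-i\frac{y}{2s}\,u_\varphi(s,y)\Big]},
\]
where I used $x/2=y/(2s)$ under the substitution and the fact that conjugation turns $\overline{\nabla u_\varphi}+i\frac{y}{2s}\bar u_\varphi$ into the conjugate of $\nabla u_\varphi-i\frac{y}{2s}u_\varphi$. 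Taking the $L^2_x$ norm and changing variables $x=ty$ (Jacobian $t^n$) cancels the factor $t^{-n}$ and leaves exactly $t^{-2}$ times the square of $\|(\nabla u_\varphi)(s)-i\frac{y}{2s}u_\varphi(s)\|_{L^2}$; rewriting $s=1/t$ yields the claimed identity.

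Finally I would feed in \eqref{YT}. Since $\alpha(n,p)=2-\frac{np}2\in(0,2)$ whenever $0<p<\frac4n$, the bound \eqref{YT} gives $\|\nabla w_\varphi(s)\|_{L^2}\le C\,s^{-\alpha(n,p)/2}$ for $s\in(0,1]$. Substituting $s=1/t$ into the identity,
\[
\Big\|\nabla u_\varphi(t)-i\frac{x}{2t}u_\varphi(t)\Big\|_{L^2(\R^n)}=\frac1t\|\nabla w_\varphi(1/t)\|_{L^2(\R^n)}\le C\,t^{\alpha(n,p)/2-1},
\]
and since $\alpha(n,p)/2-1<0$ the right-hand side tends to $0$ as $t\to\infty$. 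The argument is entirely elementary: the only point requiring care is the bookkeeping in the change of variables and conjugation when deriving the gradient identity, while the essential analytic input—the one-sided control of $\|\nabla w_\varphi\|_{L^2}$ near $t=0$—is already supplied by \eqref{YT}.
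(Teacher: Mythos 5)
Your proof is correct and follows essentially the same route as the paper's: differentiate the definition \eqref{defpseudo} to obtain the exact identity $\|\nabla u_\varphi(t)-i\tfrac{x}{2t}u_\varphi(t)\|_{L^2}=\tfrac1t\|\nabla w_\varphi(1/t)\|_{L^2}$, then feed in the bound $\|\nabla w_\varphi(s)\|_{L^2}=O(s^{-\alpha(n,p)/2})$ from \eqref{YT} and use $\alpha(n,p)/2-1<0$. The bookkeeping in the change of variables and conjugation matches the paper's computation.
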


\begin{proof}
By using \eqref{defpseudo} we get
$$\nabla w_\varphi(t,x)=\frac{1}{t^{n/2+1}} \nabla \bar u_\varphi(\frac 1t, \frac xt) e^{i\frac{|x|^2}{4t}}+ \frac{i }{2 t^{n/2+1}} x \bar u_\varphi(\frac 1t, \frac xt) e^{i\frac{|x|^2}{4t}}$$
and hence
$$\|\nabla w_\varphi(t,x)\|_{L^2(\R^n)}=   \Big \| \frac 1t \nabla \bar u_\varphi(\frac 1t, x) + i  \frac{x}{2} \bar u_\varphi(\frac 1t, x)\Big \|_{L^2(\R^n)}.$$
Then we get
$$\Big \|\nabla u_\varphi(\frac 1t, x) - i \frac {tx} 2 u_\varphi(\frac 1t, x)\Big \|_{L^2(\R^n)}=t\|\nabla w_\varphi(t,x)\|_{L^2(\R^n)}, \quad \forall t\in (0,1]$$
and by \eqref{YT} we have
$$\Big \|\nabla u_\varphi(\frac 1t, x) - i \frac {tx} 2 u_\varphi(\frac 1t, x)\Big \|_{L^2(\R^n)}=O(t^{-\frac{\alpha(n,p)} 2+1}).$$
We conclude by considering the limit as $t\rightarrow 0^+$ (and hence $\frac 1t\rightarrow \infty$) and by noticing the $-\frac{\alpha(n,p)} 2+1>0$.
\end{proof}

\begin{lemme}\label{exterior} Under the same assumptions as in Theorem \ref{main} we have the following:
\begin{equation}\label{vircon}
\forall \varepsilon>0 \quad \exists t_\varepsilon, R_\varepsilon>0
\hbox{ s.t. } \sup_{t>t_\varepsilon} \int_{|x|>R_\varepsilon t} \frac{|x|^2}{t^2} |u_\varphi(t,x)|^2 dx<\varepsilon.\end{equation}
\end{lemme}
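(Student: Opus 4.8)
The plan is to transport the statement, via the pseudo-conformal transformation \eqref{defpseudo}, into a tightness statement for $w_\varphi$ near the time $s=0^+$, where the awkward self-similar region $\{|x|>Rt\}$ becomes a fixed exterior region $\{|x|>R\}$. Setting $s=1/t$ and performing the change of variables $y=tx$ directly in \eqref{defpseudo}, one checks that for every $t>0$ and $R>0$
$$\int_{|x|>R}|x|^2|w_\varphi(1/t,x)|^2\,dx=\int_{|x|>Rt}\frac{|x|^2}{t^2}|u_\varphi(t,x)|^2\,dx.$$
Hence \eqref{vircon} is equivalent to: for every $\ep>0$ there are $s_\ep,R_\ep>0$ with $\sup_{0<s<s_\ep}\int_{|x|>R_\ep}|x|^2|w_\varphi(s,x)|^2\,dx<\ep$. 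The key conceptual point is that we cannot expect the full second moment $\int_{\Rn}|x|^2|w_\varphi(s,x)|^2\,dx$ to stay bounded as $s\to 0^+$ (this is exactly the borderline property $\varphi_+\in\Sigma_n$ analyzed in Theorems \ref{mainmoment}--\ref{equivreg}), so only the \emph{tail} must be shown small, and uniformly in $s$.

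To this end I would run a weighted local-mass (virial-type) estimate on equation \eqref{w}. Fix a smooth radial weight $\phi_R(x)=\psi_R(|x|)^2$, with $\psi_R$ nondecreasing, $0\le\psi_R'\le 1$, $\psi_R\equiv 0$ on $\{r\le R\}$ and $\psi_R(r)=r-\tfrac32 R$ for $r\ge 2R$; then $\spt\phi_R\subset\{|x|>R\}$, $|\nabla\phi_R|^2\le 4\phi_R$, and $\phi_R(x)\ge\tfrac14|x|^2$ for $|x|\ge 3R$. Set $N_R(s)=\int_{\Rn}\phi_R|w_\varphi(s,x)|^2\,dx$, which is finite for $s>0$ by \eqref{sigmaw}. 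A standard computation from \eqref{w} (taking the real part of $\int\phi_R\bar w_\varphi\,\partial_s w_\varphi$, integrating by parts, and observing that the real nonlinear term $s^{-\alpha(n,p)}\phi_R|w_\varphi|^{p+2}$ drops out and the diagonal gradient term is real) gives
$$\frac{d}{ds}N_R(s)=2\Im\int_{\Rn}\bar w_\varphi\,\nabla\phi_R\cdot\nabla w_\varphi\,dx.$$
By Cauchy--Schwarz and $|\nabla\phi_R|^2\le 4\phi_R$ this yields $\big|\tfrac{d}{ds}N_R(s)\big|\le 4\,N_R(s)^{1/2}\,\|\nabla w_\varphi(s)\|_{L^2(\Rn)}$, and therefore, after regularizing $N_R$ by $N_R+\eta$ and letting $\eta\to 0^+$, the clean inequality $\big|\tfrac{d}{ds}N_R(s)^{1/2}\big|\le 2\|\nabla w_\varphi(s)\|_{L^2(\Rn)}$.

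The remaining ingredient is time-integrability of $\|\nabla w_\varphi(s)\|_{L^2(\Rn)}$ near $s=0$: by \eqref{YT} we have $\|\nabla w_\varphi(s)\|_{L^2(\Rn)}\le C s^{-\alpha(n,p)/2}$ on $(0,1]$, and since $\alpha(n,p)=2-\tfrac{np}{2}<2$ the exponent satisfies $\alpha(n,p)/2<1$, so $s^{-\alpha(n,p)/2}$ is integrable on $(0,1]$ and $\int_0^{s_1}\|\nabla w_\varphi(\sigma)\|_{L^2(\Rn)}\,d\sigma\to 0$ as $s_1\to 0^+$. Integrating the differential inequality from $s$ up to a fixed $s_1\in(0,1]$ gives, for all $0<s<s_1$, the $s$-uniform bound $N_R(s)^{1/2}\le N_R(s_1)^{1/2}+2\int_0^{s_1}\|\nabla w_\varphi(\sigma)\|_{L^2(\Rn)}\,d\sigma$. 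Given $\ep>0$ I would first choose $s_1$ small so that the integral term is $<\tfrac13\sqrt\ep$, and then, with $s_1$ now fixed, use $w_\varphi(s_1)\in\Sigma_n$ (by \eqref{sigmaw}) to pick $R$ so large that $N_R(s_1)^{1/2}\le(\int_{|x|>R}|x|^2|w_\varphi(s_1)|^2\,dx)^{1/2}<\tfrac13\sqrt\ep$. Then $\sup_{0<s<s_1}N_R(s)<\ep$, so $\int_{|x|>3R}|x|^2|w_\varphi(s)|^2\,dx\le 4N_R(s)<4\ep$ for all $s<s_1$; relabelling constants and returning to $t=1/s$ through the identity above yields \eqref{vircon}.

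The main obstacle, and the reason the pseudo-conformal picture is decisive, is that the tail must be controlled even though the total weighted mass may diverge as $s\to 0^+$: the mechanism that saves the argument is that the flux $\tfrac{d}{ds}N_R$ across $\{|x|=R\}$ is governed by $\|\nabla w_\varphi\|_{L^2}$, which is time-integrable up to $s=0$ precisely because $\alpha(n,p)<2$. Closing the estimate into the clean bound $\big|\tfrac{d}{ds}N_R^{1/2}\big|\le 2\|\nabla w_\varphi\|_{L^2}$ forces the cutoff to satisfy $|\nabla\phi_R|^2\lesssim\phi_R$ uniformly in $R$, which is why $\phi_R$ is taken to be a square; and the structural feature making the scheme work for the whole range $0<p<\tfrac4n$ is that the nonlinearity, being real, contributes nothing to the local-mass identity, so no sign or smallness condition on the singular weight $s^{-\alpha(n,p)}$ is ever needed.
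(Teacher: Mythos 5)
Your proposal is correct and follows essentially the same route as the paper: reduce via the pseudo-conformal transform to an exterior second-moment bound for $w_\varphi$ near $s=0^+$, localize with a squared cutoff so that $|\nabla\phi_R|^2\lesssim\phi_R$, derive the differential inequality $|\tfrac{d}{ds}N_R|\lesssim N_R^{1/2}\|\nabla w_\varphi\|_{L^2}$, and close using the integrability of $s^{-\alpha(n,p)/2}$ from \eqref{YT}, choosing the time threshold first and the radius second. The only (cosmetic) difference is that you integrate $|\tfrac{d}{ds}N_R^{1/2}|\le 2\|\nabla w_\varphi\|_{L^2}$ directly, whereas the paper absorbs via Young's inequality in a Gronwall-type estimate.
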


\begin{proof}
We have the identity
$$|w_\varphi(s,x)|^2=\frac {1}{s^{n}} |u_\varphi(\frac 1s, \frac xs)|^2$$
and hence
$$\int_{|x|>R} |x|^2 |w_\varphi(s,x)|^2 = \int_{|x|>R} |x|^2  |u_\varphi(\frac 1s, \frac xs)|^2 \frac{dx}{s^n}=
s^2 \int_{s|x|>R} |x|^2 |u_\varphi(\frac 1s, x)|^2dx.$$
If we denote $s=\frac 1t$ we get the following identity:
$$ \int_{|x|>R t} \frac{|x|^2}{t^2} |u_\varphi(t,x)|^2 dx=\int_{|x|>R} |x|^2|w_\varphi(\frac 1t,x)|^2 dx, \quad \forall R>0$$
hence in order to get the conclusion \eqref{vircon} we are reduced to prove:
\begin{equation}\label{cyl}\forall \varepsilon>0 \quad \exists \tilde t_\varepsilon, \tilde R_\varepsilon
\hbox{ s.t. } \sup_{t\in (0, \tilde t_\varepsilon]} \int_{|x|>R_\varepsilon} |x|^2 |w_\varphi(t, x)|^2 dx<\varepsilon.\end{equation}
More precisely showing smallness of the contribution to the renormalized second order moment of $u_\varphi(t,x)$ for large times in the exterior of a cone is equivalent to showing
smallness of the contribution to the second order moment of $w_\varphi(t,x)$ for small times in the exterior of a cylinder.
In order to prove \eqref{cyl} first we introduce a non-negative function $\varphi\in C^\infty(\R)$ such that:
\begin{enumerate}
\item $\varphi(x)=\varphi(|x|)$;
\item $\varphi(x)=|x|, \quad \forall |x|>1$;
\item $\varphi(x)=0, \quad \forall |x|<\frac 12$.
\end{enumerate}
Along with $\varphi$ we introduce the rescaled functions $\varphi_R(x)=R\varphi\big (\frac xR \big )$.
Then by elementary computations we get:
\begin{multline*}
\Big |\frac d{dt} \int_{\R^n} (\varphi_R(x))^2 |w_\varphi(t,x)|^2 dx \Big |\leq C \int_{\R^n} \varphi_R(x) |\nabla \varphi_R(x)|  |w_\varphi(t,x)| |\nabla w_\varphi(t,x)| dx
\\\leq C \|\nabla w_\varphi(t,x)\|_{L^2(\R^n)} \big(\int_{\R^n} (\varphi_R(x))^2 |w_\varphi(t,x)|^2 dx\big)^\frac 12\end{multline*}
and hence if we denote $w_{\varphi,R}(t)= \int_{\R^n} (\varphi_R(x))^2 |w_\varphi(t,x)|^2 dx$
we get
\begin{equation}\label{ode}\Big |\frac d{dt} w_{\varphi,R}(t)\Big |\leq C t^{-\frac \alpha 2} \sqrt {w_{\varphi,R}(t)}
\end{equation}
where we have used \eqref{YT} to estimate $\|\nabla w_\varphi(t,x)\|_{L^2(\R^n)}$.
Notice that in order to conclude \eqref{cyl}  it is sufficient to show that
for every $\varepsilon>0$ there exist $\tilde t_\varepsilon, \tilde R_\varepsilon>0$ such that
\begin{equation}\label{equiveq}\sup_{t\in (0, \tilde t_\varepsilon]}w_{\varphi, \tilde R_\varepsilon}(t)<\varepsilon.\end{equation}
In order to select $\tilde t_\varepsilon, \tilde R_\varepsilon>0$ with this property notice that by \eqref{ode} for every given $\bar s, \bar t \in (0,1]$ with $\bar s<\bar t$ we get:
\begin{multline}\label{gronw}
\sup_{t\in (\bar s , \bar t]} {w_{\varphi, R}}(t)\leq {w_{\varphi,R}}(\bar t) + C\sqrt{\sup_{t\in (\bar s, \bar t]} {w_{\varphi,R}}(t)} \int_{\bar s}^{\bar t}  \tau^{-\frac \alpha 2} d\tau
\\
\leq  {w_{\varphi,R}}(\bar t) + \frac 12 \sup_{t\in (\bar s, \bar t]} {w_{\varphi,R}}(t) + \frac {C^2}{2} \Big (\int_0^{\bar t}  \tau^{-\frac \alpha 2} d\tau\Big )^2, \quad \forall R>0.
\end{multline}
Next we select first $\tilde t_\varepsilon>0$ such that
$$\frac {C^2}2 \Big (\int_0^{\tilde t_\varepsilon}  \tau^{-\frac \alpha 2} d\tau\Big )^2 < \frac \varepsilon 4,$$ and in a second step we select $R_\varepsilon>0$ such that
$${w_{\varphi, R_\varepsilon}}(\tilde t_\varepsilon)<\frac \varepsilon 4,$$
Notice that this choice of
$\tilde t_\varepsilon$ is possible due to the integrablity of $\tau^{-\frac{\alpha}{2}}$ in $(0,1)$ and the choice of $\tilde R_\varepsilon>0$ is possible by the property
$w_\varphi (\tilde t_\varepsilon,x)\in \Sigma_n$ (see \eqref{sigmaw}).
Then
we conclude by \eqref{gronw}, where we choose $\bar t =\tilde t_\varepsilon$, $R=\tilde R_\varepsilon$ and $\bar s\in (0, \tilde t_\varepsilon)$,
the following estimate
$$\sup_{t\in (\bar s , \tilde t_\varepsilon]} w_{\varphi, \tilde R_\varepsilon}(t)<\varepsilon, \quad \forall \bar s\in (0, \tilde t_\varepsilon).$$
By the arbitrarity of $\bar s$ we conclude the estimate \eqref{equiveq} by taking $\bar s\rightarrow 0^+$. We point out that we have worked first for $\bar s>0$ and at the end we passed to the limit as
$\bar s\rightarrow 0^+$ since $\sup_{t\in (\bar s , 1]}  w_{\varphi, R}(t)<\infty$ for every $R>0$ (see \eqref{sigmaw}),
on the contrary in principle \eqref{sigmaw} does not imply $\sup_{t\in (0 , 1]}  w_{\varphi, R}(t)<\infty$
and hence \eqref{gronw} for $\bar s=0$ could become trivial since we could have two infinity quantities on the two sides.

\end{proof}

\begin{proof}[Proof of Theorem \ref{main}]
As already mentioned at the beginning of the section the key point is to establish \eqref{convnormgrad}. First notice that
the operator $\nabla$ and the group $e^{it\Delta}$ commute, and moreover $e^{it\Delta}$ are isometries in $L^2(\R^n)$.
Hence \eqref{convnormgrad} is equivalent to show
$$\|\nabla u_\varphi(t,x)\|_{L^2(\R^n)}\overset{t\rightarrow \infty} \longrightarrow \|\nabla \varphi_+\|_{L^2(\R^n)},$$
which in turn by Lemma \ref{nablatilde} is equivalent to
\begin{equation}\label{fulldomain}\Big \|\frac {x}{2t} u_\varphi(t,x)\Big \|_{L^2(\R^n)}\overset{t\rightarrow \infty} \longrightarrow \|\nabla \varphi_+\|_{L^2(\R^n)}.\end{equation}
Next we show that \eqref{fulldomain} is almost satisfied if we compute the $L^2$ norm in the more restricted region inside the cone $|x|<Rt$, for $R>0$
that will be choosen larger and larger.
More precisely we shall prove the following fact:
\begin{equation}\label{tulo}\int_{|x|<Rt} \frac{|x|^2}{t^2} |u_\varphi(t,x)|^2 dx\overset{t\rightarrow \infty} \longrightarrow 4 \int_{|x|<\frac R2}  |x|^2 |\hat \varphi_+(x)|^2 dx.
\end{equation}
By combining this property with \eqref{vircon} and by noticing that
$$\int_{|x|<\frac R2}  |x|^2 |\hat \varphi_+(x)|^2 dx \overset{R\rightarrow \infty} \longrightarrow
\int_{\R^n}  |x|^2 |\hat \varphi_+(x)|^2 dx =\|\nabla \varphi_+\|_{L^2}^2$$
we conclude \eqref{fulldomain}.
 In order to prove \eqref{tulo} we shall use the following asymptotic formula to describe free waves (see \cite{d} and \cite{rs}):
\begin{equation}\label{saymptoticbehavior}
\Big \|e^{it\Delta} h - \frac {e^{i\frac{|x|^2}{4t}}}{(2i t)^\frac n2} \hat h(\frac {x}{2t})\Big \|_{L^2(\R^n)}\overset{t\rightarrow \infty}\longrightarrow 0,
\quad \forall\, h\in L^2({\R}^n),
\end{equation}
where $\hat{h}(\xi)$ denotes the Fourier transform of $h$.
For every $R>0$ fixed we get by the Minkowski inequality
\begin{multline}\label{import}\left \|\frac{|x|}{t} \big (u_\varphi(t,x) - \frac {e^{i\frac{|x|^2}{4t}}}{(2i t)^\frac n2} \hat \varphi_+(\frac {x}{2t})
\big)
\right \|_{L^2(|x|<R t)}\leq
\left \|\frac{|x|}{t}\big (u_\varphi (t,x) - e^{it\Delta} \varphi_+\big)
\right \|_{L^2(|x|<R t)}\\+
\left \| \frac{|x|}{t} \big (e^{it\Delta} \varphi_+ -
\frac {e^{i\frac{|x|^2}{4t}}}{(2i t)^\frac n2} \hat \varphi_+(\frac {x}{2t})
\big)
\right \|_{L^2(|x|<Rt)}.
\end{multline}
Next notice that for any fixed $R>0$  inside the cone $|x|<Rt$ we have that the weight
$\frac{|x|}t$ is uniformly bounded and hence by combining \eqref{saymptoticbehavior} with \eqref{assL2}
we conclude that both terms on the r.h.s. in \eqref{import} converge to zero as $t\rightarrow \infty$, and hence we conclude:
$$\left \|\frac{|x|}{t} \big (u_\varphi (t,x) - \frac {e^{i\frac{|x|^2}{4t}}}{(2i t)^\frac n2} \hat \varphi_+(\frac {x}{2t})
\big)
\right \|_{L^2(|x|<R t)} \overset{t\rightarrow \infty} \longrightarrow 0.$$
From this fact we deduce
\begin{equation*}
\int_{|x|<Rt} \frac{|x|^2}{t^2} |u_\varphi(t,x)|^2 dx- \int_{|x|<Rt}  \frac{|x|^2}{t^2}|\hat \varphi_+(\frac {x}{2t})|^2 \frac{dx}{(2t)^n}
\overset{t\rightarrow \infty} \longrightarrow 0,\end{equation*}
that by a change of variable implies \eqref{tulo}.

\end{proof}
\section{Proof of Theorem \ref{mainmoment}}

Since we have to prove an equivalence we show separately the two implications.\\

{\em Proof of $\Rightarrow $}. Recall  that we have the property $u_\varphi(t,x)\in \Sigma_n$ for every $t>0$, since $\varphi\in \Sigma_n$ (see \cite{c}).
Moreover by the assumption $$\Big \|\frac{|x|}t (u_\varphi(t,x)- e^{it\Delta} \varphi_+)\Big \|_{L^2(\R^n)}\overset{t\rightarrow \infty} \longrightarrow 0$$
and in particular we have that there exists $\bar t$ such that
\begin{equation}\label{finite}\||x|(u_\varphi (\bar t,x)- e^{i\bar t\Delta} \varphi_+)\|_{L^2(\R^n)}<\infty.\end{equation}
Since we know that $u_\varphi(\bar t,x)\in \Sigma_n$ necessarily we have by \eqref{finite} and the Minkowski inequality that $e^{i\bar t\Delta} \varphi_+\in \Sigma_n$ and hence,
by the invariance of the space $\Sigma_n$ under the linear flow $e^{it\Delta}$, we deduce
$$\varphi_+=e^{-i\bar t\Delta} (e^{i\bar t\Delta} \varphi_+) \in \Sigma_n.$$
\\
\\
{\em Proof of $\Leftarrow $}.
First notice that the conclusion of Lemma \ref{exterior} is true even if we replace the nonlinear wave $u_\varphi(t,x)$ by the linear wave $e^{it\Delta}\varphi_+$.
In fact the proof of this fact in the linear case is even easier since once we compute the equation solved by the pesudo-conformal transformation of the linear wave $e^{it\Delta}\varphi_+$ we get again the linear Schr\"odinger equation and hence the norm of the gradient of the transformed solution is constant on the interval $(0,1]$ (which is even better than the bound that we get  in the nonlinear case, see \eqref{YT}). Hence the conclusion of  Lemma \ref{exterior} follows in the linear setting, by miming  {\em mutatis mutandis} the
proof given in the nonlinear setting.
Summarizing we have
\begin{equation}\label{virconnew}
\forall \varepsilon>0 \quad \exists t_\varepsilon, R_\varepsilon>0
\hbox{ s.t. } \sup_{t>t_\varepsilon} \max \Big\{\int_{|x|>R_\varepsilon t} \frac{|x|^2}{t^2} |u_\varphi(t,x)|^2 dx,
\int_{|x|>R_\varepsilon t} \frac{|x|^2}{t^2} |e^{it\Delta} \varphi_+|^2 dx\Big \}<\varepsilon.
\end{equation}
On the other hand for every fixed $R>0$ we have by \eqref{assL2}
\begin{equation}\label{assL2new}\Big \|\frac{|x|}t (u_\varphi(t,x)- e^{it\Delta} \varphi_+)\Big \|_{L^2(|x|<Rt)}\overset{t\rightarrow \infty} \longrightarrow 0, \quad \forall R>0,
\end{equation}
where we used that the weight $\frac{|x|}t$ is bounded inside the cone $|x|<Rt$.
The conclusion follows by combining \eqref{virconnew} and \eqref{assL2new}.

\section{Proof of Theorem \ref{equivreg}}

We work with $\varphi^+$ and $w_\varphi^+$ (the same proof works for $\varphi^-$ and $w_\varphi^-$).
We claim that we have the following identity
\begin{equation}\label{immid}\hat \varphi_+(\xi)= (2i)^\frac n2 \bar w^+_\varphi ( 2 \xi ).\end{equation}
Once this formula is proved then the conlusion follows by elementary Fourier analysis.
In order to prove \eqref{immid} first recall that by definition we have
$$\Big \|\frac{1}{t^{n/2}} \bar u_\varphi(\frac 1t, \frac xt) e^{i\frac{|x|^2}{4t}}-w^+_\varphi(x)\Big\|_{L^2(\R^n)}\overset{t\rightarrow 0^+}\longrightarrow 0$$
which in turn is equivalent to
$$\Big \|\frac{1}{t^{n/2}} \bar u_\varphi(\frac 1t, \frac xt) - e^{-i\frac{|x|^2}{4t}}w^+_\varphi (x)\Big\|_{L^2(\R^n)}\overset{t\rightarrow 0^+}\longrightarrow 0$$
and by the change of variable formula $\frac x t=y$ can be written as follows
$$\Big \|\bar u_\varphi(\frac 1t, y) - t^\frac n2  e^{-i\frac{t|y|^2}{4}}w^+_\varphi (t y)\Big\|_{L^2(\R^n)}\overset{t\rightarrow 0^+}\longrightarrow 0.$$
By introducing $s=\frac 1t$ we get
$$\Big \|\bar u_\varphi(s, y) - \frac 1{s^\frac n2}  e^{-i\frac{|y|^2}{4s}}w^+_\varphi (\frac y s)\Big\|_{L^2(\R^n)}\overset{s\rightarrow \infty}\longrightarrow 0$$
and by taking conjugate
\begin{equation}\label{w=}
\Big \|u_\varphi(s, y) - \frac 1{s^\frac n2}  e^{i\frac{|y|^2}{4s}}\bar w^+_\varphi (\frac y s)\Big\|_{L^2(\R^n)}\overset{s\rightarrow \infty}\longrightarrow 0.\end{equation}
Notice that by \eqref{saymptoticbehavior} we have
\begin{equation}\label{h+}\|e^{is\Delta} h_+ - \frac 1{s^\frac n2}  e^{i\frac{|y|^2}{4s}}\bar w^+_\varphi (\frac y {s})\Big\|_{L^2(\R^n)}\overset{s\rightarrow \infty}\longrightarrow 0\end{equation}
where
\begin{equation}\label{ident}(2i)^\frac n2 \bar w^+_\varphi (2y )=\hat h_+(y),\end{equation}
and hence by \eqref{w=} and \eqref{h+} we have
$$\|u_\varphi(s, y)-e^{is\Delta} h_+\|_{L^2(\R^n)}\overset{s\rightarrow \infty} \longrightarrow 0.$$
On the other hand we are assuming \eqref{assL2} and by uniqueness of scattering state we get $\varphi_+=h_+$
and hence by \eqref{ident} we get \eqref{immid}.

\section{Appendix: scattering in $\Sigma_n$ via lens transform, $p_n\leq p<\frac 4n$}

The aim of this appendix is to provide alternative proof of the results established in \cite{cw} and \cite{ts} by using the lens transform (instead of the pseudoconformal energy
which is the key tool in \cite{cw} and \cite{ts}).
We introduce, following \cite{bt}, \cite{btt}, \cite{carles}, for every time $t\in (-\frac \pi 4, \frac \pi 4)$ the lens transform acting as follows on time independent function $G:\R^n\rightarrow \R$:
$${\mathcal L}_t G (x)=(\cos (2t))^{-\frac n2}G\Big (\frac x{\cos(2t)}\Big ) e^{-i\frac{|x|^2 \tan (2t)}2}, \quad x\in \R^n.$$
By direct computation we have that if we denote
\begin{equation}\label{harmos}H=-\Delta + |x|^2,\end{equation}
then we get the following identity:
\begin{equation}\label{lin}e^{i(t(s))H} = {\mathcal L}_{t(s)} \circ e^{is\Delta}, \hbox{ where }  t(s)=\frac{\arctan (2s)}{2}.\end{equation}
Moreover we have that if the function $u_\varphi(t,x)$ is solution to \eqref{u} then  the function $v_\varphi(t,x)$ defined as follows:
\begin{equation}\label{nonlin}v_\varphi(t(s), x ):=  {\mathcal L}_{t(s)} \Big(u_\varphi(s,\cdot)\Big)(x)\end{equation}
solves the following Cauchy problem
\begin{equation}\label{lenstr}
\begin{cases}i\partial_t v_\varphi -H v_\varphi + \cos(2t)^{-\alpha(n,p)} v_\varphi |v_\varphi|^p=0, \quad (t,x)\in (-\frac \pi 4, \frac \pi 4)\times \R^n, \quad \alpha(n,p)=2-\frac{np}2
\\
v_\varphi(0,x)= \varphi\in \Sigma_n,
\end{cases}
\end{equation}
where $H$ is defined in \eqref{harmos}. Notice that the main advantage of the lens transform compared with the pseudoconformal transform
is that the full norm $\Sigma_n$ is involved in the energy associated with \eqref{lenstr}, and not only the $H^1(\R^n)$
norm. Therefore the lens transform seems to be a suitable tool to study the scattering in $\Sigma_n$.\\

We recall that the Cauchy problem \eqref{lenstr} admits one unique solution
\begin{equation}\label{localreg}v_\varphi(t,x)\in C([0, \frac \pi 4); \Sigma_n)\cap L^r_{loc}([0, \frac \pi 4); {\mathcal W}^{1,s}(\R^n))\end{equation}
where $(r,s)$ is an admissible Strichartz couple (namely $\frac 2r+\frac ns=\frac n2$ and  $r\geq 2$ for $n\geq 3$, $r>2$ for $n=2$, $r\geq 4$ for $n=1$)
and ${\mathcal W}^{1,s}(\R^n))$ denotes the harmonic Sobolev spaces associated, namely
$${\mathcal W}^{1,s}(\R^n)=\{w\in L^s(\R^n) \hbox{ s.t. } H^\frac 12 w\in L^s(\R^n)\}$$
endowed with the norm $\|w\|_{{\mathcal W}^{1,s}(\R^n))}
=\|w\|_{L^s}+\|H^\frac s2 w\|_{L^s}$.
Following \cite{DG} one can show that for $1<s<\infty$ there exist $C>0$ such that
\begin{equation}\label{equivalence}
\frac 1{C} (\|\nabla u\|_{L^s(\R^n)} + \|\langle x\rangle u\|_{L^s(\R^n)})  \leq
\|\varphi\|_{{\mathcal W}^{1,s}(\R^n)}\leq  C (\|\nabla u\|_{L^s(\R^n)} + \|\langle x \rangle u\|_{L^s(\R^n)})  .
\end{equation}
Moreover it is well-known that Strichartz estimates are  available (locally in time) for the
group $e^{-itH}$, under the same numerology for which they are satisfied (globally in time) for $e^{-it\Delta}$ (they can be obtained simply by applying the lens transform). Hence we have all the tools
necessary to construct local solutions to \eqref{lenstr} by repeating {\em mutatis mutandis} the same computations
necessary to construct local solutions for the usual NLS. Notice that the chain-rule in the framework of the harmonic Sobolev spaces
are essentially reduced to the classical chain rule in the usual Sobolev spaces by \eqref{equivalence}.
In order to show that the solution can be extended on the full interval $[0, \frac \pi 4]$ with regularity \eqref{localreg} we can rely
on the following conservation law:
\begin{multline}\label{enlens}\frac d{dt} \Big ( \cos(2t)^{\alpha(n,p)}\|v_\varphi(t,x)\|_{\Sigma_n}^2 +\frac 1{p+2} \|v_\varphi(t,x)\|_{L^{p+2}(\R^n)}^{p+2}\Big)\\
=\|v
_\varphi(t,x)\|_{\Sigma_n}^2 \frac d{dt} \cos(2t)^{\alpha(n,p)}<0,\quad \forall t\in [0, \frac \pi 4)\end{multline}
whose proof follows the same argument to get \eqref{YT} in the context of the pseudoconformal transformation.
Since the weight $\cos(2t)^{\alpha(n,p)}$ has no zero in the interval $[0, \frac \pi 4)$ we have a control
of the $\Sigma_n$ norm of the solution up to time $t=\frac \pi 4$ and hence we can globalize in $[0,\frac \pi 4]$.
A similar discussion hold in the interval $[-\pi/4,0]$\\

We have the following result that reduces the question of scattering in $\Sigma_n$ for  $u_\varphi(t,x)$ solution to \eqref{u} (see \eqref{cw}) 
to the  extendibility (by continuity)
of the function $v_\varphi(t,x)$ up to time $t=\frac{\pi}{4}$  in the space $\Sigma_n$.
\begin{proposition}\label{sigmaeq}
Let $\varphi\in \Sigma_n$, $0<p<\frac 4{n-2}$ for $n\geq 3$ and $0<p<\infty$ for $n=1,2$. Then we have the following equivalence:
\begin{equation*}
\exists \varphi_+\in \Sigma_n \hbox{ s.t. } \|e^{-is\Delta} (u_\varphi(s,y))-\varphi_+\|_{\Sigma_n}\overset{s\rightarrow \infty}\longrightarrow 0
\iff \exists v_+\in \Sigma_n \hbox{ s.t. } \|v_\varphi(t,x)-v_+\|_{\Sigma_n}\overset{t\rightarrow {\frac {\pi} 4}^-} \longrightarrow 0.
\end{equation*}
\end{proposition}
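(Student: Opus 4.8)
The plan is to exploit the single algebraic identity that the lens transform provides, namely \eqref{lin}, together with the crucial feature that the harmonic oscillator propagator $e^{itH}$ (with $H$ as in \eqref{harmos}) is an \emph{isometry} of $\Sigma_n$, in sharp contrast with $e^{it\Delta}$. Rewriting \eqref{lin} as $\mathcal{L}_{t(s)}=e^{it(s)H}\circ e^{-is\Delta}$ and inserting it into the definition \eqref{nonlin} of $v_\varphi$, I obtain the bridge
\[
v_\varphi(t(s),\cdot)=e^{it(s)H}\big(e^{-is\Delta}u_\varphi(s,\cdot)\big),\qquad t(s)=\frac{\arctan(2s)}2\xrightarrow[s\to\infty]{}\frac\pi4^-,
\]
so that $e^{-it(s)H}v_\varphi(t(s),\cdot)=e^{-is\Delta}u_\varphi(s,\cdot)$. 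The two limits in the statement, $s\to\infty$ and $t\to\frac\pi4^-$, are therefore literally the same limit read through $t=t(s)$.

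Before starting I would record the two properties of the flow $e^{itH}$ that drive everything. Since $\|f\|_{\Sigma_n}^2=\langle(H+1)f,f\rangle=\|(H+1)^{1/2}f\|_{L^2}^2$ and $(H+1)^{1/2}$ commutes with $e^{itH}$, the unitarity of $e^{itH}$ on $L^2(\R^n)$ gives at once (i) $\|e^{itH}f\|_{\Sigma_n}=\|f\|_{\Sigma_n}$, i.e. $e^{itH}$ is an isometry of $\Sigma_n$, and (ii) $t\mapsto e^{itH}f$ is strongly continuous in $\Sigma_n$ for every $f\in\Sigma_n$, by applying the $L^2$-strong continuity to $(H+1)^{1/2}f\in L^2(\R^n)$.

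For the implication $\Rightarrow$, assuming $\|e^{-is\Delta}u_\varphi(s,\cdot)-\varphi_+\|_{\Sigma_n}\to0$, I would set $v_+:=e^{i\frac\pi4 H}\varphi_+\in\Sigma_n$ and estimate, via the triangle inequality,
\[
\|v_\varphi(t(s),\cdot)-v_+\|_{\Sigma_n}\le \|e^{it(s)H}\big(e^{-is\Delta}u_\varphi(s,\cdot)-\varphi_+\big)\|_{\Sigma_n}+\|e^{it(s)H}\varphi_+-e^{i\frac\pi4 H}\varphi_+\|_{\Sigma_n}.
\]
By the isometry property (i) the first term equals $\|e^{-is\Delta}u_\varphi(s,\cdot)-\varphi_+\|_{\Sigma_n}\to0$, while (again by (i)) the second equals $\|\varphi_+-e^{i(\frac\pi4-t(s))H}\varphi_+\|_{\Sigma_n}$, which tends to $0$ by the strong continuity (ii) since $\frac\pi4-t(s)\to0$. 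The reverse implication $\Leftarrow$ is completely symmetric: given $\|v_\varphi(t,\cdot)-v_+\|_{\Sigma_n}\to0$, I would set $\varphi_+:=e^{-i\frac\pi4 H}v_+$ and split $\|e^{-is\Delta}u_\varphi(s,\cdot)-\varphi_+\|_{\Sigma_n}=\|e^{-it(s)H}v_\varphi(t(s),\cdot)-e^{-i\frac\pi4 H}v_+\|_{\Sigma_n}$ into the analogous two pieces, using (i) and (ii) again.

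I do not expect any genuine obstacle here: once the isometry of $e^{itH}$ on $\Sigma_n$ is recognized, the argument is a two-line triangle-inequality computation. The only point requiring a little care, and the conceptual heart of the appendix, is precisely the verification of (i)--(ii), i.e. that $\Sigma_n$ is exactly the space on which $e^{itH}$ acts unitarily and continuously. This is what makes the lens transform succeed where the naive comparison \eqref{cwequ} through $e^{it\Delta}$ fails, the latter growing the $\Sigma_n$-norm quadratically in time.
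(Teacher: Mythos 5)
Your proof is correct and takes essentially the same route as the paper: both rest on the identity $e^{-is\Delta}u_\varphi(s,\cdot)=e^{-i(t(s))H}\big(v_\varphi(t(s),\cdot)\big)$ obtained from \eqref{lin} and \eqref{nonlin}, together with the fact that $e^{itH}$ is an isometry of $\Sigma_n$ and that $t(s)\rightarrow \frac{\pi}{4}$ as $s\rightarrow\infty$. You merely spell out the strong continuity of $t\mapsto e^{itH}$ on $\Sigma_n$ that the paper's one-line conclusion leaves implicit.
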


\begin{proof}
The identity \eqref{lin}  is equivalent to
$$e^{-is\Delta}= e^{-i(t(s))H}  \circ {\mathcal L}_{t(s)}$$
and hence
\begin{equation}\label{pomors}e^{-is\Delta}(u_\varphi(s,y))= e^{-i(t(s))H} \Big({\mathcal L}_{t(s)} (u_\varphi(s,\cdot)\Big)=e^{-i(t(s))H} (v_\varphi(t(s), y))
\end{equation}
where we has used at the last step \eqref{nonlin}. We conclude since $e^{-i(t(s))H}$ are isometries in $\Sigma_n$ and
$\lim_{s\rightarrow \infty} t(s)=\frac \pi 4$.

\end{proof}

\subsection{The case $p_n<p<\frac 4{n-2}$}
In this subsection we provide an alternative proof of the following result first established in \cite{ts} (see also \cite{c}).
\begin{theoreme}[\cite{ts}]
Assume $p_n<p<\frac 4{n-2}$ for $n\geq 3$ and $p_n<p<\infty$ for $n=1,2$ (here $p_n$ is defined in \eqref{pnlens}). Then for every $\varphi \in \Sigma_n$ there exists $\varphi_+\in \Sigma_n$ such that
$$\|e^{-it\Delta} (u_\varphi(t,x))-\varphi_+\|_{\Sigma_n}\overset{t\rightarrow \infty} \longrightarrow 0.$$
\end{theoreme}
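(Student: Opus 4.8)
The plan is to invoke Proposition \ref{sigmaeq}, which reduces the scattering statement \eqref{cw} for $u_\varphi$ to showing that the lens-transformed solution $v_\varphi(t,x)$, solving \eqref{lenstr}, extends by continuity to $t=\frac\pi4$ with values in $\Sigma_n$; that is, that $v_\varphi(t)$ converges in $\Sigma_n$ as $t\to(\frac\pi4)^-$. The only inputs I would use are the local regularity \eqref{localreg}, the identification $\Sigma_n\cong{\mathcal W}^{1,2}(\R^n)$ furnished by \eqref{equivalence}, and the a priori bound coming from the conservation law \eqref{enlens}: since $\cos(2t)^{\alpha(n,p)}$ is decreasing on $[0,\frac\pi4)$ and $\alpha(n,p)>0$ in the mass-subcritical range, the monotone quantity is controlled by its value at $t=0$, whence $M:=\sup_{t\in[0,\frac\pi4)}\|v_\varphi(t)\|_{L^{p+2}(\R^n)}<\infty$.

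The core is a Strichartz estimate for $e^{-itH}$ performed in ${\mathcal W}^{1,2}$, run against the Duhamel formula $v_\varphi(t)=e^{-i(t-t_0)H}v_\varphi(t_0)-i\int_{t_0}^t e^{-i(t-s)H}F(s)\,ds$ with $F(s)=\cos(2s)^{-\alpha(n,p)}v_\varphi|v_\varphi|^p$. I would work with the admissible pair $(r,p+2)$, $r=\frac{4(p+2)}{np}$ (the admissibility $r\ge2$ for $n\ge3$ is precisely energy-subcriticality $p\le\frac4{n-2}$, and the analogous constraints hold for $n=1,2$). Via \eqref{equivalence} and the pointwise bound on the nonlinear derivative, Hölder yields the nonlinear estimate $\|v_\varphi|v_\varphi|^p\|_{{\mathcal W}^{1,(p+2)'}}\lesssim\|v_\varphi\|_{L^{p+2}}^p\|v_\varphi\|_{{\mathcal W}^{1,p+2}}\le M^p\|v_\varphi\|_{{\mathcal W}^{1,p+2}}$. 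Taking the $L^{r'}_t$ norm and splitting in time by Hölder into $L^r_t$ (the Strichartz exponent) times $L^\theta_t$ with $\frac1\theta=1-\frac2r$, the weight enters only through $\|\cos(2\cdot)^{-\alpha(n,p)}\|_{L^\theta(t_0,\frac\pi4)}$.

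This is where the hypothesis $p>p_n$ is decisive, and it is the main obstacle to clear. Near $t=\frac\pi4$ one has $\cos(2s)\sim 2(\frac\pi4-s)$, so $\cos(2\cdot)^{-\alpha(n,p)}\in L^\theta$ in a left neighborhood of $\frac\pi4$ if and only if $\alpha(n,p)\,\theta<1$; a direct computation with $\alpha(n,p)=2-\frac{np}2$ and $\theta=\frac{2(p+2)}{4-(n-2)p}$ shows that $\alpha(n,p)\,\theta<1$ is equivalent to $np^2+(n-2)p-4>0$, i.e. exactly to $p>p_n$ by \eqref{pnlens}. Granting this, I would run a bootstrap: on $[t_0,\frac\pi4)$ the Strichartz inequality combined with the above gives $\|v_\varphi\|_{L^r([t_0,\frac\pi4);{\mathcal W}^{1,p+2})}\le C\|v_\varphi(t_0)\|_{\Sigma_n}+CM^p\|\cos(2\cdot)^{-\alpha(n,p)}\|_{L^\theta(t_0,\frac\pi4)}\|v_\varphi\|_{L^r([t_0,\frac\pi4);{\mathcal W}^{1,p+2})}$, and choosing $t_0$ close enough to $\frac\pi4$ that $CM^p\|\cos(2\cdot)^{-\alpha(n,p)}\|_{L^\theta(t_0,\frac\pi4)}\le\frac12$, the last term is absorbed. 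Since $\|v_\varphi(t_0)\|_{\Sigma_n}<\infty$ by \eqref{localreg} and the norm over $[0,t_0]$ is finite by the local theory, this yields $v_\varphi\in L^r([0,\frac\pi4);{\mathcal W}^{1,p+2})$, hence $F\in L^{r'}([0,\frac\pi4);{\mathcal W}^{1,(p+2)'})$.

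Convergence in $\Sigma_n$ then follows from a Cauchy criterion: for $t_1<t_2$ one has $e^{it_2H}v_\varphi(t_2)-e^{it_1H}v_\varphi(t_1)=-i\int_{t_1}^{t_2}e^{isH}F(s)\,ds$, and the inhomogeneous Strichartz estimate bounds its ${\mathcal W}^{1,2}$ norm by $\|F\|_{L^{r'}([t_1,t_2);{\mathcal W}^{1,(p+2)'})}$, which tends to $0$ as $t_1,t_2\to(\frac\pi4)^-$ by absolute continuity of the integral. Thus $e^{itH}v_\varphi(t)$ is Cauchy in $\Sigma_n$; since $e^{-itH}$ is a strongly continuous group of isometries of $\Sigma_n$, $v_\varphi(t)$ itself converges in $\Sigma_n$ as $t\to(\frac\pi4)^-$, and Proposition \ref{sigmaeq} delivers the claimed scattering. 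The delicate point throughout is matching the Strichartz time-integrability exponent $\theta$ against the singular weight $\cos(2s)^{-\alpha(n,p)}$, and $p_n$ is exactly the threshold at which this matching becomes possible.
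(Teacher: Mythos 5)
Your argument is correct and coincides with the paper's own proof: reduction via Proposition \ref{sigmaeq}, Strichartz estimates for $e^{-itH}$ with the admissible pair $(r,p+2)$, H\"older in time placing the weight $\cos(2t)^{-\alpha(n,p)}$ in $L^{\frac r{r-2}}$ near $t=\frac\pi4$ (integrable precisely when $np^2+(n-2)p-4>0$, i.e. $p>p_n$), absorption for $t_0$ close to $\frac\pi4$ using the uniform $L^{p+2}$ bound from \eqref{enlens}, and a Cauchy criterion from the Duhamel formula. There are no substantive differences from the argument in the paper.
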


\begin{proof}

By Proposition \ref{sigmaeq} we have to prove
$\|v_\varphi(t,x)-v_+\|_{\Sigma_n}\overset{t\rightarrow {\frac {\pi} 4}^-} \longrightarrow 0$, where $v_+\in \Sigma_n$.
In the rest of the proof we shall denote $v=v_\varphi$.
Next we denote by
$(r,p+2)$, the couple of exponents such that
\begin{equation}\label{admiss}\frac 2r + \frac n{p+2}=\frac n2\end{equation}
and we shall first prove
$v\in L^r((0, \frac \pi 4); {\mathcal W}^{1, p+2}(\R^n)).$
It is easy to check that the couple $(r,p+2)$ is Strichartz admissible in any dimension $n\geq 1$.
In view of \eqref{localreg} it is sufficient
to prove the existence of $t_0\in (0, \frac \pi 4)$ such that $v\in L^r((t_0, \frac \pi 4); {\mathcal W}^{1, p+2}(\R^n)),$
and in turn it is sufficient to show that 
$\sup_{\tau\in (t_0, \frac \pi 4)} \| v\|_{L^r((t_0, \tau); {\mathcal W}^{1, p+2}(\R^n))}<\infty$.
Notice that the main advantage of working with $\tau<\frac \pi 4$ is that in the following computation we deal with finite quantities.
By Strichartz estimates available for the propagator $e^{it H}$
we get:
\begin{equation}\label{stricht0new}\| v\|_{L^r((t_0, \tau); {\mathcal W}^{1, p+2}(\R^n))}
\leq C\|v(t_0)\|_{\Sigma_n} + C \| \cos(2t)^{-\alpha(n,p)} v|v|^p\|_{L^{r'}((t_0, \tau); {\mathcal W}^{1, (p+2)'}(\R^n))}\end{equation}
where $t_0$ is an arbitrary point in the interval $[0, \frac \pi 4)$ that we shall fix later,  $r', p'$ denote conjugate exponents
and $\tau$ is arbitrary in $(t_0, \frac \pi 4)$.
Notice that by the chain rule and H\"older inequality w.r.t. space and time we can continue the estimate as follows
\begin{equation}\label{stricht0}\dots
\leq C\|v(t_0)\|_{\Sigma_n} + C \| \cos(2t)^{-\alpha(n,p)}\|_{L^\frac r{r-2}(t_0, \frac \pi 4) } \|v\|_{L^{r}((t_0, \tau); {\mathcal W}^{1, (p+2)}(\R^n))}
\|v\|_{L^\infty((t_0, \tau);L^{p+2}(\R^n))}^p.\end{equation}
Due to \eqref{enlens} (which implies $\sup_{t\in (0, \frac \pi4)} \|v(t,x)\|_{L^{p+2}}^{p+2}<\infty$) we can absorb the second term on the r.h.s. in \eqref{stricht0} in the l.h.s. in \eqref{stricht0new} provided we have
$\cos(2t)^{-\alpha(n,p)}\in {L^\frac r{r-2}(0, \frac \pi 4) }$ and $t_0$ is close enough to $\frac \pi 4$. This integrability condition is equivalent to
$\frac{\alpha(n,p) r}{r-2}<1$ which in turn, thanks to \eqref{admiss}, is equivalent to $np^2+(n-2)p-4>0$ (recall that $\alpha(n,p)=2 -\frac{np}2$). We conclude since we recall $p_n$ is the larger root of
the algebraic equation $nx^2+(n-2) x-4=0$.\\
To deduce the existence of the limit $v_+$ by the Duhamel formulation and dual of Strichzrtz estimates
we have for any couple $0<\tau<\sigma<\frac \pi 4$:
\begin{multline*}\|v(\tau)-v(\sigma)\|_{\Sigma_n}=
\Big \| \int_{\tau}^{\sigma} e^{i(t-s)H} \cos(2s)^{-\alpha(n,p)} v(s)|v(s)|^pds\Big \|_{\Sigma_n}\\=
\Big \| \int_{\tau}^{\sigma} e^{-isH} \cos(2s)^{-\alpha(n,p)} v(s)|v(s)|^pds\Big \|_{\Sigma_n}
\leq C \Big \| \cos(2s)^{-\alpha(n,p)} v(s)|v(s)|^p ds\Big\|_{L^{r'}((\tau, \sigma)); {\mathcal W}^{1, (p+2)'}(\R^n))}.
\end{multline*}
Arguing as above and by using $v\in L^r([t_0, \frac \pi 4]; {\mathcal W}^{1, p+2}(\R^n))$ we can continue as follows:
\begin{equation*}
\dots \leq C  \| \cos(2t)^{-\alpha(n,p)}\|_{L^\frac r{r-2}(\tau, \sigma ) }
\|v\|_{L^{r}((\tau, \sigma); {\mathcal W}^{1, (p+2)}(\R^n))}\overset{\tau, \sigma \rightarrow \frac \pi 4^-}\longrightarrow 0.
\end{equation*}

\end{proof}

\subsection{The case $p_n\leq p<\frac 4{n-2}$, $n\geq 3$}

Next result includes the one in \cite{ts} with the extra bonus that it covers the limit case $p=p_n$.
We shall give the proof for $n\geq 3$, however the result is true also for $n=1,2$. We recall that compared with the original proof in \cite{cw} we deal
with the equation obtained after the lens transform, which is adapted to work in the $\Sigma_n$ space, rather than the pseudoconformal transfomation
that seems to perform better in the $H^1(\R^n)$ setting. Another point is that we give a proof of the key alternative
\eqref{altern1} or \eqref{altern2} below, based on a continuity argument. This is different of the proof given in \cite{cw} based on a fixed point.
We restrict below to the case $n\geq 3$ however, following \cite{cw} the proof can be adapted to the case $n=1$, the case $n=2$ has been treated in \cite{on}.
\begin{theoreme}[\cite{cw}]
Assume $n\geq 3$ and $p_n\leq p<\frac 4{n-2}$ ($p_n$ is defined in \eqref{pnlens}), then for every $\varphi \in \Sigma_n$ there exists $\varphi_+\in \Sigma_n$ such that
$$\|e^{-it\Delta}(u_\varphi(t,x))-\varphi_+\|_{\Sigma_n}\overset{t\rightarrow \infty} \longrightarrow 0.$$
\end{theoreme}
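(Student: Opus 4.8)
The plan is to reduce, via Proposition \ref{sigmaeq}, the desired $\Sigma_n$ scattering of $u_\varphi$ to the statement that the lens--transformed solution $v=v_\varphi$ of \eqref{lenstr} extends continuously up to $t=\frac\pi4$ in $\Sigma_n$. Since $e^{i\frac\pi4 H}$ is a $\Sigma_n$-isometry and $e^{-itH}$ is strongly continuous on $\Sigma_n$ over the compact interval $[0,\frac\pi4]$, this is in turn equivalent to the convergence of $e^{itH}v(t)$ in $\Sigma_n$ as $t\to\frac\pi4^-$. By the Duhamel formula $e^{itH}v(t)=\varphi+i\int_0^t e^{isH}\cos(2s)^{-\alpha(n,p)}v|v|^p\,ds$ and the dual Strichartz estimate for $e^{-itH}$, this convergence follows once one knows $v\in L^r((t_0,\frac\pi4);\mathcal W^{1,p+2}(\R^n))$ for some $t_0<\frac\pi4$, where $(r,p+2)$ is the admissible couple of \eqref{admiss}. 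Thus everything is reduced to the Strichartz integrability of $v$ all the way up to $t=\frac\pi4$: for $t<\frac\pi4$ this is granted by \eqref{localreg}, and for $p>p_n$ it was obtained in the previous subsection by absorbing the nonlinear term using $\cos(2t)^{-\alpha(n,p)}\in L^{r/(r-2)}(0,\frac\pi4)$. The whole difficulty is that at the limiting exponent $p=p_n$ one has exactly $\frac{\alpha(n,p)r}{r-2}=1$, so this weight fails (logarithmically) to be integrable and the crude absorption breaks down.

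To replace the missing integrability I would extract a finer a priori bound from the conservation law \eqref{enlens}. Integrating \eqref{enlens} on $(t_0,\frac\pi4)$ and using that its right-hand side has a sign yields, besides the uniform bound $\sup_{(0,\pi/4)}\|v\|_{L^{p+2}}<\infty$ already contained in \eqref{enlens}, the integrated energy--decay estimate $\int_{t_0}^{\pi/4}\big|\tfrac{d}{dt}\cos(2t)^{\alpha(n,p)}\big|\,\|v(t)\|_{\Sigma_n}^2\,dt<\infty$, whose tail therefore tends to $0$ as $t_0\to\frac\pi4^-$. The point is that this weighted, $L^2$-based quantity sees more decay near $t=\frac\pi4$ than the pointwise bound $\|v(t)\|_{\Sigma_n}^2\lesssim\cos(2t)^{-\alpha(n,p)}$ coming from \eqref{enlens}, and it is precisely this surplus that should compensate the borderline non-integrability of the weight.

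The core of the argument would then be a continuity (bootstrap) argument in place of the contraction used in \cite{cw}. Fixing $t_0<\frac\pi4$ and setting $g(\tau)=\|v\|_{L^r((t_0,\tau);\mathcal W^{1,p+2})}$, which is continuous, nondecreasing, vanishes at $\tau=t_0$ and is finite for every $\tau<\frac\pi4$ by \eqref{localreg}, I would run a Strichartz estimate on $(t_0,\tau)$, estimate the nonlinearity by the chain rule in the harmonic Sobolev space (reduced to the ordinary one by \eqref{equivalence}), and distribute the factors so that the non-integrable weight $\cos(2t)^{-\alpha(n,p)}$ is paired against $\|v\|_{\Sigma_n}^2$ rather than against the crude supremum of $\|v\|_{L^{p+2}}^p$. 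This should produce an inequality of the schematic form $g(\tau)\le C\|v(t_0)\|_{\Sigma_n}+C\,\omega(t_0)\,g(\tau)$, where $\omega(t_0)\to0$ as $t_0\to\frac\pi4^-$ is built from the energy--decay tail above. The alternative then reads: either $g$ stays below the threshold $2C\|v(t_0)\|_{\Sigma_n}$ on all of $(t_0,\frac\pi4)$, or it exceeds it at some first time; choosing $t_0$ so close to $\frac\pi4$ that $C\,\omega(t_0)\le\frac12$ and invoking the continuity of $g$ together with $g(t_0)=0$, the second possibility is excluded and one obtains $g(\tau)\le 2C\|v(t_0)\|_{\Sigma_n}$ uniformly in $\tau<\frac\pi4$. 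Hence $v\in L^r((t_0,\frac\pi4);\mathcal W^{1,p+2})$, which by the first paragraph closes the proof.

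I expect the genuine obstacle to be exactly the nonlinear estimate feeding the bootstrap: at $p=p_n$ the pairing that would turn $\cos(2t)^{-\alpha(n,p)}$ into the integrable weight $\big|\tfrac{d}{dt}\cos(2t)^{\alpha(n,p)}\big|$ sits at the $L^2$-based (energy) endpoint of the Strichartz range, where the space exponent degenerates, so making it rigorous requires either a careful limiting/interpolation between the $\mathcal W^{1,p+2}$ Strichartz norm and the $\Sigma_n$ energy norm, or a Lorentz-space refinement reflecting that $\cos(2t)^{-\alpha(n,p)}$ only just fails to lie in $L^{r/(r-2)}$. The merit of the continuity argument is precisely that it tolerates this endpoint delicacy, and therefore reaches the limiting exponent $p=p_n$ that the fixed-point scheme of \cite{cw} attains only through a more delicate contraction.
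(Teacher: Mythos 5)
Your reduction via Proposition \ref{sigmaeq} and the Duhamel/Strichartz framework is the right starting point, and you have correctly located the difficulty: at $p=p_n$ the weight $\cos(2t)^{-\alpha(n,p)}$ just fails to lie in $L^{r/(r-2)}(0,\frac\pi4)$, so the crude absorption of the non-endpoint case breaks down. But the step on which everything hinges --- the nonlinear estimate that is supposed to produce $g(\tau)\le C\|v(t_0)\|_{\Sigma_n}+C\,\omega(t_0)\,g(\tau)$ with $\omega(t_0)\to0$ --- is not carried out, and you yourself flag it as ``the genuine obstacle''. It is more than an endpoint technicality: pairing $\cos(2t)^{-\alpha(n,p)}$ against $\|v(t)\|_{\Sigma_n}^2$ so as to exploit the integrated decay $\int^{\pi/4}\bigl|\tfrac{d}{dt}\cos(2t)^{\alpha(n,p)}\bigr|\,\|v(t)\|_{\Sigma_n}^2\,dt<\infty$ leaves $p$ further factors of $v$ to be placed somewhere, and the only pointwise a priori bound available, $\|v(t)\|_{\Sigma_n}^2\lesssim\cos(2t)^{-\alpha(n,p)}$, goes the wrong way: distributing those factors via Gagliardo--Nirenberg and this bound still produces a non-integrable time weight at $p=p_n$. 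Moreover an inequality \emph{linear} in $g$ is not what the power nonlinearity yields unless all $p$ extra factors are absorbed into an a priori controlled quantity with the right time integrability, which is exactly what is missing. So as written the argument has a genuine gap at its central step.

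The paper closes this gap by a different mechanism. It first proves a dichotomy valid for all $\frac 4{n+2}<p<\frac 4{n-2}$: either $v(t)$ converges in $\Sigma_n$ as $t\to\frac\pi4^-$ (\eqref{altern1}), or the quantity $\|v(t)\|_{\Sigma_n}^p\bigl(\int_t^{\pi/4}|\cos(2\tau)|^{-\frac{4\alpha(n,p)}{4-p(n-2)}}\,d\tau\bigr)^{\frac{4-p(n-2)}4}$ is bounded below (\eqref{altern2}). The dichotomy is obtained by negating \eqref{altern2} to extract times $t_k\to\frac\pi4$ along which this product is $o(1)$, running Strichartz on $(t_k,t)$ with the pair $(r,q)$, $1-\frac2q=\frac{p(n-2)}{2n}$, and arriving at the \emph{superlinear} inequality $X_k(t)\le a_k+b_kX_k(t)^{1+p}$ with $a_kb_k^{1/p}\to0$; Lemma \ref{fk} together with continuity of $X_k$ and $X_k(t_k)=0$ then traps $X_k$ in the bounded component of the sublevel set, giving $v\in L^r\mathcal W^{1,q}\cap L^\infty\Sigma_n$ up to $\frac\pi4$. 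Finally \eqref{altern2} is excluded for $p\ge p_n$ by inserting the lower bound into the monotonicity identity \eqref{enlens}: the resulting derivative is bounded above by a non-integrable negative function on $(0,\frac\pi4)$, a contradiction. Your integrated energy-decay observation is the same resource the paper taps, but it is deployed there to \emph{rule out a scenario} rather than to feed a Strichartz absorption; if you want to salvage your scheme, that contradiction argument is the ingredient to import.
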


\begin{proof}
By Proposition \ref{sigmaeq} we are reduced to prove
$\|v_\varphi(t,x)-v_+\|_{\Sigma_n}\overset{t\rightarrow {\frac {\pi} 4}^-} \longrightarrow 0$, where $v_+\in \Sigma_n$.
In the rest of the proof we shall denote $v=v_\varphi$.

We claim that we have the following alternative for every $\frac 4{n+2}<p<\frac 4{n-2}$ (notice $\frac 4{n+2}<p_n$):\\

- either there exists $v_+\in \Sigma_n$ such that
\begin{equation}\label{altern1}\|v(t,x)-v_+\|_{\Sigma_n}\overset{t\rightarrow {\frac \pi 4}^-}\longrightarrow 0;\end{equation}\\

- or we have the lower bound \begin{equation}\label{altern2}\inf_{t\in [0, \frac \pi 4)}
\|v (t,x)\|_{\Sigma_n}^p \Big (\int_t^{\frac \pi4} |\cos(2\tau)|^{-\frac{4 \alpha(n,p)}{4-p(n-2)}} d\tau\Big )^\frac{4-p(n-2)}{4}>0.\end{equation}
We shall prove first how the alternative \eqref{altern1} or \eqref{altern2} implies the result. We need to exclude the scenario \eqref{altern2}
under the extra condition $p_n\leq p<\frac 4{n-2}$.
Indeed if by the absurd \eqref{altern2} is true then
we get by \eqref{enlens}
\begin{multline*}\frac d{dt} \Big ( \cos(2t)^{\alpha(n,p)}\|v(t,x)\|_{\Sigma_n}^2 +\frac 1{p+2} \|v(t,x)\|_{L^{p+2}(\R^n)}^{p+2}\Big )
\\\leq -2 \varepsilon_0^\frac 2p  \alpha(n,p)  \sin (2t) \cos(2t)^{\alpha(n,p)-1} \Big (\int_t^{\frac \pi4} |\cos(2\tau)|^{-\frac{4\alpha(n,p)}{4-p(n-2)}} d\tau\Big)^{\frac{-4+p(n-2)}{2p}},
\quad t\in (0, \frac \pi 4),
\end{multline*}
where $\varepsilon_0>0$ is the infimum in \eqref{altern2}. Notice that $\cos(2t)$ behaves as $(\frac \pi 4-t)$ when $t\rightarrow \frac \pi 4^-$
and hence we get by elementary computations
\begin{equation*}\frac d{dt} \Big ( \cos(2t)^{\alpha(n,p)}\|v(t,x)\|_{\Sigma_n}^2 +\frac 1{p+2} \|v(t,x)\|_{L^{p+2}(\R^n)}^{p+2}\Big )
\\\leq -c_0 (t-\frac \pi 4)^{-1-\frac{np^2+p(n-2)-4}{2p}},\quad t\in (0, \frac \pi 4)
\end{equation*}
for a suitable $c_0>0$. Notice that the function at the r.h.s. fails to be integrable on $(0,\frac \pi 4)$ as long as $p\geq p_n$ and hence by integration
on the interval $(0, \frac \pi 4)$ we easily get a contradiction.\\

Next we give a proof of the alternative \eqref{altern1} or \eqref{altern2} which is based on the following remark.
\begin{lemme}\label{fk}
Given two sequences  $a_k, b_k>0$ for $k\in \N$ and $p>0$,  define $f_k:\R^+\rightarrow \R$ as follows
$f_k(s)=s-a_k - b_ks^{1+p}.$
Assume that $a_kb_k^\frac 1p\overset{k\rightarrow \infty} \longrightarrow 0$ then there exists $\bar k$ such that
for every $k>\bar k$ there exist $0<c_k<d_k<\infty$ such that
$$\{s\in \R^+ \hbox{ s.t. } f_k(s)\leq 0\}=[0, c_k]\cup [d_k,\infty).$$
\end{lemme}

\begin{proof}
One can check that the function $f_k$ has one unique maximum at the point $\bar s_k>0$ given by the condition
$f_k'(\bar s_k)=0$, namely $\bar s_k^p=\frac 1{(p+1)b_k}$. Moreover $f_k$ is increasing for $s<\bar s_k$, decreasing for $s>\bar s_k$,
$\lim_{s\rightarrow \infty} f_k(s)=-\infty$ and $f_k(0)<0$.
We conclude  provided that we show that for $k$ large enough we have $f_k(\bar s_k)>0$.
By direct computation we get
$f_k(\bar s_k)= \frac 1{(p+1)^\frac 1p b_k^\frac 1p} - a_k - \frac 1{(p+1)^{1+\frac 1p}b_k^\frac 1p}$
and hence the condition $f_k(\bar s_k)>0$ is equivalent to
$\frac 1{(p+1)^\frac 1p} - \frac 1{(p+1)^{1+\frac 1p}}>a_k b_k^\frac 1p$
which is satisfied for $k$ large enough due to the assumption $a_kb_k^\frac 1p\overset{k\rightarrow \infty} \longrightarrow 0$.

\end{proof}

We can now complete the proof of the alternative \eqref{altern1} or \eqref{altern2} in the general setting $\frac 4{n+2}<p<\frac 4{n-2}$.
Since now on we shall use that under this condition on $p$ we have $\int_{0}^{\frac \pi4} |\cos(2\tau)|^{-\frac{4\alpha(n,p)}{4-p(n-2)}} d\tau
<\infty$ that will be used since now on without any further comment.
We shall prove that if \eqref{altern2} is false then \eqref{altern1} is satisfied.
If \eqref{altern2} is false then there exists a sequence $t_k\in (0, \frac \pi 4)$ and $\varepsilon_k>0$ such that
\begin{equation}\label{infinitesimal}t_k\overset{k\rightarrow \infty} \longrightarrow \frac \pi 4
 \quad \hbox{ and } \quad \|v(t_k,x)\|_{\Sigma_n}^p \Big (\int_{t_k}^{\frac \pi4} |\cos(2\tau)|^{-\frac{4\alpha(n,p)}{4-p(n-2)}} d\tau
 \Big )^\frac{4-p(n-2)}{4}=\varepsilon_k\overset{k\rightarrow \infty} \longrightarrow 0.\end{equation}
In the sequel we denote $v(t_k,x)=v_k$.
Next we choose the Strichartz admissible couple $(r,q)$ such that
$$1-\frac 2q=\frac {p(n-2)}{2n}$$
and by Strichartz estimates and H\"older inequalities (in space and time)
\begin{multline}\label{stricht0newend}\| v\|_{L^r((t_k, t); {\mathcal W}^{1, q}(\R^n))}
\leq C\|v_k\|_{\Sigma_n} + C \| \cos(2t)^{-\alpha(n,p)} v|v|^p\|_{L^{r'}((t_k, t); {\mathcal W}^{1, \frac{2nq}{2n+pq(n-2)}}(\R^n))}\\
\leq C \|v_k\|_{\Sigma_n}  + C \Big (\int_{t_k}^{\frac \pi4} |\cos(2\tau)|^{-\frac{4\alpha(n,p)}{4-p(n-2)}} d\tau\Big )^\frac{4-p(n-2)}{4} \|v\|_{L^\infty((t_k, t); L^\frac{2n}{n-2}(\R^n))}^p \| v\|_{L^r((t_k, t); {\mathcal W}^{1, q}(\R^n))} \end{multline}
and by the Sobolev embedding and elementary inequalities, we can continue the estimate as follows
\begin{multline*}
\dots \leq C \|v_k\|_{\Sigma_n}  + C \Big (\int_{t_k}^{\frac \pi4} |\cos(2\tau)|^{-\frac{4\alpha(n,p)}{4-p(n-2)}} d\tau\Big )^\frac{4-p(n-2)}{4} \|v\|_{L^\infty((t_k, t); \Sigma_n)}^p \| v\|_{L^r((t_k, t); {\mathcal W}^{1, q}(\R^n))}\\
\leq C \|v_k\|_{\Sigma_n} + C \Big (\int_{t_k}^{\frac \pi4} |\cos(2\tau)|^{-\frac{4\alpha(n,p)}{4-p(n-2)}} d\tau\Big )^\frac{4-p(n-2)}{4} \|v-v_k\|_{L^\infty((t_k, t); \Sigma_n)}^p \| v\|_{L^r((t_k, t); {\mathcal W}^{1, q}(\R^n))}\\+ C \Big (\int_{t_k}^{\frac \pi4} |\cos(2\tau)|^{-\frac{4\alpha(n,p)}{4-p(n-2)}} d\tau \Big)^\frac{4-p(n-2)}{4} \|v_k\|_{\Sigma_n}^p \| v\|_{L^r((t_k, t); {\mathcal W}^{1, q}(\R^n))}.
\end{multline*}
Due to \eqref{infinitesimal} we have that if we choose $k$ large enough then the last term on the r.h.s. can be estimated by
$\frac 12 \| v\|_{L^r((t_k, t); {\mathcal W}^{1, q}(\R^n))}$. Hence we can absorb it on the l.h.s. and we get
\begin{multline}\label{diff2}
\| v\|_{L^r((t_k, t); {\mathcal W}^{1, q}(\R^n))}
\leq C \|v_k\|_{\Sigma_n} \\+ C \Big (\int_{t_k}^{\frac \pi4} |\cos(2\tau)|^{-\frac{4p}{4-p(n-2)}} d\tau\Big )^\frac{4-p(n-2)}{4} \|v-v_k\|_{L^\infty((t_k, t); \Sigma_n)}^p \| v\|_{L^r((t_k, t); {\mathcal W}^{1, q}(\R^n))}.
\end{multline}
Again by Strichartz estimates and triangular inequality we get
\begin{multline*}\| v-v_k\|_{L^\infty((t_k, t); \Sigma_n)}\leq \|v_k\|_{\Sigma_n} + \| v\|_{L^\infty((t_k, t); \Sigma_n)}
\\\leq C\|v_k\|_{\Sigma_n} + C \| \cos(2t)^{-\alpha(n,p)} v|v|^p\|_{L^{r'}((t_k, t); {\mathcal W}^{1, \frac{2nq}{2n+pq(n-2)}}(\R^n))}\end{multline*}
and hence we can estimate the r.h.s. as above and we get for $k$ large enough
\begin{multline}\label{diff}
\| v-v_k\|_{L^\infty((t_k, t); \Sigma_n)} \leq C \|v_k\|_{\Sigma_n} \\+ C \Big (\int_{t_k}^{\frac \pi4} |\cos(2\tau)|^{-\frac{4\alpha(n,p)}{4-p(n-2)}} d\tau
\Big )^\frac{4-p(n-2)}{4} \|v-v_k\|_{L^\infty
((t_k, t); \Sigma_n)}^p \| v\|_{L^r((t_k, t); {\mathcal W}^{1, q}(\R^n))}+ \frac 12\| v\|_{L^r((t_k, t); {\mathcal W}^{1, q}(\R^n))}.
\end{multline}
Next we introduce the functions $X_k:(t_k,\frac \pi 4)\rightarrow \R^+$ defined as follows
$X_k(t)=\|v-v_k\|_{L^\infty((t_k, t); \Sigma_n)}+ \| v\|_{L^r((t_k, t); {\mathcal W}^{1, q}(\R^n))}$.
Notice that by \eqref{diff2} and \eqref{diff} we get
$$X_k(t)\leq C \|v_k\|_{\Sigma_n} + C \Big (\int_{t_k}^{\frac \pi 4} |\cos(2\tau)|^{-\frac{4\alpha(n,p)}{4-p(n-2)}} d\tau \Big)^\frac{4-p(n-2)}{4} (X_k(t))^{p+1}$$
and hence $X_k(t)$ belongs to the sublevel $\{f_k(s)\leq 0\}$
where $f_k(s)$ is as in Lemma \ref{fk},
with $a_k=C \|v_k\|_{\Sigma_n} $ and $b_k=C (\int_{t_k}^{\frac \pi 4} |\cos(2\tau)|^{-\frac{4\alpha(n,p)}{4-p(n-2)}} d\tau)^\frac{4-p(n-2)}{4} $.
Notice that $a_kb_k^\frac 1p \overset{t\rightarrow \infty} \longrightarrow 0$ by \eqref{infinitesimal} and hence if we choose $k=\bar k+1$ 
(following the notations of the Lemma \ref{fk})
we get, since $X_{\bar k+1}(t)$ are continuous functions and $X_{\bar k+1}(t_{\bar k+1})=0$, that $X_{\bar k+1}(t)$ leaves for every $t\in (t_{\bar k+1}, \frac \pi 4)$ in the corresponding bounded
connected component $[0, c_{\bar k+1}]$ provided by Lemma \ref{fk}.
Summarizing we get $v(t,x)\in L^r((0,\frac \pi 4);  {\mathcal W}^{1, q}(\R^n))\cap L^\infty ((0,\frac \pi 4);  \Sigma_n)$.
Going back to the Duhamel formulation, using Strichartz estimates and H\"older inequality in space and time (in the same spirit as in \eqref{stricht0newend})
we get for every $0<\tau<\sigma<\frac \pi 4$:
\begin{multline*}\|v(\tau)-v(\sigma)\|_{\Sigma_n}=
\Big \| \int_{\tau}^{\sigma} e^{i(t-s)H} \cos(2s)^{-\alpha(n,p)} v(s)|v(s)|^pds \Big\|_{\Sigma_n}\\=
\Big \| \int_{\tau}^{\sigma} e^{-isH} \cos(2s)^{-\alpha(n,p)} v(s)|v(s)|^pds\Big \|_{\Sigma_n}
\leq C \| \cos(2s)^{-\alpha(n,p)} v(s)|v(s)|^p\|_{L^{r'}((\tau, \sigma); {\mathcal W}^{1, \frac{2nq}{2n+pq(n-2)}}(\R^n))}
\\\leq C \Big (\int_{0}^{\frac \pi4} |\cos(2\tau)|^{-\frac{4\alpha(n,p)}{4-p(n-2)}} d\tau \Big)^\frac{4-p(n-2)}{4} \|v\|_{L^\infty((\tau, \sigma); L^\frac{2n}{n-2}(\R^n))}^p \| v\|_{L^r((\tau, \sigma); {\mathcal W}^{1, q}(\R^n))}
\overset{\tau, \sigma\rightarrow \frac \pi 4^-}\longrightarrow 0.
\end{multline*}
where at the last step we have used the property 
$v(t,x)\in L^r((0,\frac \pi 4);  {\mathcal W}^{1, q}(\R^n))\cap L^\infty ((0,\frac \pi 4);  \Sigma_n)$ established above and the Sobolev embedding
$\Sigma_n\subset L^\frac{2n}{n-2}$.

\end{proof}


\end{document}